\newtheorem{theorem}{Theorem}[section]
\newtheorem{corollary}[theorem]{Corollary}
\newtheorem{assumption}[theorem]{Assumption}
\theoremstyle{definition}
\newtheorem{definition}[theorem]{Definition}
\theoremstyle{remark}
\newtheorem{remark}[theorem]{Remark}
\numberwithin{equation}{section}
\def\a{\alpha}
\def\t{\tau}
\def\G{\Gamma}
\def\O{\Omega}
\newcommand{\reals}{\mathbb{R}}
\newcommand{\norm}[1]{\left\|#1\right\|}
\author[Y. Guo]{Yanqiu Guo}
\address{Department of Mathematics \& Statistics \\Florida International University\\
Miami, Florida 33199, USA} \email{yanqiu.guo@fiu.edu}
\author[M. A. Rammaha]{Mohammad A. Rammaha}
\address{Department of
Mathematics \\ University of Nebraska-Lincoln \\
Lincoln, NE  68588-0130, USA} \email{mrammaha1@unl.edu}
\author[S. Sakuntasathien]{Sawanya Sakuntasathien}
\address{ Department of Mathematics \\ Faculty of Science \\ Silpakorn University  \\
Nakhonpathom, 73000, Thailand} \email{sakuntasathien\_s@silpakorn.edu}
\title[Blow-up of hyperbolic equations of viscoelasticity]
{Blow-up of a hyperbolic equation of viscoelasticity with supercritical nonlinearities}
\date{July 21, 2016}
\keywords{viscoelasticity, memory, integro-differential, damping, source, blow-up}
\subjclass[2010]{35L10, 35L20, 35L70, 35B44}
\begin{document}
\maketitle

\begin{abstract}
We investigate a hyperbolic PDE, modeling wave propagation in viscoelastic media, under the influence of a linear memory term of Boltzmann type, and a nonlinear damping modeling friction, as well as an energy-amplifying \emph{supercritical} nonlinear source:
\begin{align*}
\begin{cases}
u_{tt}- k(0) \Delta u -  \int_0^{\infty} k'(s) \Delta u(t-s) ds + |u_t|^{m-1}u_t=|u|^{p-1}u,   \text{\;\;\;\;in\;\;} \O \times (0,T), \\
u(x,t)=u_0(x,t), \quad \text{ in } \O \times (-\infty,0],
\end{cases}
\end{align*}
where $\O$ is a bounded domain in $\mathbb R^3$ with a Dirichl\'et boundary condition. The relaxation kernel $k$ is monotone decreasing and $k(\infty)=1$.
We study  blow-up of solutions when the source is  stronger than  dissipations, i.e., $p> \max\{m,\sqrt{k(0)}\}$, under two different scenarios: first, the total energy is negative, and the second, the total energy is positive with sufficiently large quadratic energy. This manuscript is a follow-up work of the paper \cite{visco-Guo-Ramm} in which Hadamard well-posedness of this equation has been established in the finite energy space. The model under consideration features a \emph{supercritical}  source and a linear memory that accounts for the full past history as time goes to $-\infty$, which is distinct from other relevant models studied in the literature which usually involve subcritical sources and a finite-time memory.
\end{abstract}

\section {Introduction}\label{S1}
\subsection{The model and  literature overview}
Viscoelastic materials demonstrate properties between those of elastic materials and viscous fluid. In the nineteenth century, Boltzmann \cite{Boltz1} realized that the behavior of these materials should be modeled through constitutive relations that involve long but fading memory. In particular, Boltzmann  initiated the classical linear theory of viscoelasticity. As a consequence of the widespread use of polymers and other modern materials which exhibit stress relaxation, the theory of viscoelasticity has provided important applications in materials science and engineering. Please see \cite{Col1} (and references therein) for the fundamental modeling development of linear viscoelasticity. We also refer the reader to the monographs \cite{Fab2,Renardy} for surveys regarding the mathematical aspect of the theory of viscoelasticity. In addition, the literature is quite  rich in various results on well-posedness and asymptotic stability of hyperbolic PDEs and conservation laws with  \emph{memory} terms of Boltzmann type, see for instance \cite{Cav5,Chen-Christ,Daf2,Daf1,Daf3,Daf4,DPZ,Guesmia,LMM,Pata}  and the references therein.

In this manuscript, we investigate the following nonlinear hyperbolic equation of viscoelasticity:
\begin{align} \label{1.1}
\begin{cases}
u_{tt}- k(0) \Delta u -  \int_0^{\infty} k'(s) \Delta u(t-s) ds + |u_t|^{m-1}u_t=|u|^{p-1}u,\text{\;in\;} \O \times (0,T)\\
u(x,t)=0, \quad \text{ on }  \Gamma \times (-\infty,T) \\
u(x,t)=u_0(x,t), \quad \text{ in } \O \times (-\infty,0],
\end{cases}
\end{align}
where the unknown $u(x,t)$ is an $\reals$-valued function defined on $\Omega \times (-\infty,T)$, and $\O\subset \reals^3$ is a bounded domain (open and connected) with smooth boundary $\G$. Our results extend easily to bounded domains in $\mathbb R^n$, by accounting for the corresponding Sobolev embedding, and accordingly adjusting the conditions imposed on the parameters.
 The system (\ref{1.1}) models the wave propagation in viscoelastic material under the influence of frictional type of damping as well as energy-amplifying sources. Here, $|u_t|^{m-1}u_t$ ($m\geq 1$) represents a nonlinear damping which dissipates energy and drives the system toward stability, while $|u|^{p-1}u$ ($1\leq p<6$) represents a nonlinear source of \emph{supercritical} growth rate which models an external force that amplifies energy and drives the system to possible instability. The memory integral $\int_0^{\infty} k'(s) \Delta u(t-s) ds$ of the Boltzmann type quantifies the viscous resistance and provides a weak form of energy dissipation by assuming that the relaxation kernel satisfies: $k'(s)< 0 \text{\;\;for all\;\;} s>0$ and $k(\infty)=1$. It also accounts for the full past history as time goes to $-\infty$, as opposed to the finite-memory models where the history is taken only over the interval $[0,t]$.

Nonlinear wave equations under the influence of damping and sources have been attracting considerable attention in the research field of analysis of nonlinear PDEs. In \cite{G-T}, Georgiev and Todorova considered a nonlinear wave equation with damping and sources:
\begin{align}  \label{wave}
u_{tt}- \Delta u + |u_t|^{m-1}u_t = |u|^{p-1}u,       \text{\;\;\;in } \O \times (0,T),
\end{align}
under a Dirichl\'et boundary condition, where $1<p\leq 3$ in 3D. They showed  that equation (\ref{wave}) is globally well-posed in the finite energy space $H^1_0(\O) \times L^2(\O)$ in the case $1<p\leq m$. In addition,  a blow-up result was obtained in \cite{G-T} provided the initial energy is sufficiently negative and $1<m<p$.
The related wave equations with nonlinear \emph{boundary} damping--source interactions have been studied in \cite{Viti} by Vitillaro, and in \cite{CCL} by Cavalcanti, Domingos Cavalcanti and Lasiecka. Also we would like to mention the work \cite{BLR,BLR2,BLR3} by Barbu, Lasiecka and Rammaha, where they investigated wave equations with \emph{degenerate} damping and source terms,  where  the prototype equation of this class is
\begin{align}   \label{dege}
u_{tt}-\Delta u + |u|^k |u_t|^{m-1}u_t = |u|^{p-1}u,      \text{\;\;\;in } \O \times (0,T),
\end{align}
with $u=0$ on the boundary. In (\ref{dege}) the degenerate damping $|u|^k |u_t|^{m-1}u_t$ models  friction modulated by strain. Well-posedness of various types of solutions, such as generalized solutions, weak solutions and strong solutions, to the system (\ref{dege}) has been established with certain assumptions on parameters $k$, $m$, $p$, and moreover,  solutions are global if $p\leq k+m$, and blow up if $p>k+m$ and  initial energy is negative (see \cite{BLR,BLR2, BLR3}).
We also refer the reader to the papers \cite{RS1,RS2} for a study of a system of wave equations with coupled degenerate damping and coupled nonlinear sources.
For more works on nonlinear wave and hyperbolic equations with damping--source interactions, we mention
\cite{ACC,Agre-Ramm,ACCRT,CCP,Gazz, Messaoudi1, Messaoudi2, Levine-Serrin,Pei1,Pei2,Pitts-Ramm,Ramm-Strei} and the references therein.

 Our work in this article follows the recent trend in studying nonlinear wave equations with \emph{supercritical} sources. We say a typical source term $|u|^{p-1}u$ \emph{supercritical} if $3<p<6$ (in 3D), and under such scenario, the mapping  $u \mapsto |u|^{p-1}u$ is not a locally Lipschitz mapping from $H^1_0(\O)$ into $L^2(\O)$, and therefore, the classical fixed-point technique used in \cite{G-T} is not directly applicable to establish the well-posedness in finite energy space. A breakthrough was made in a series of papers \cite{BL3,BL2,BL1} by Bociu and Lasiecka in studying a nonlinear wave equation with damping and \emph{supercritical} sources acting in the interior of the domain and on the boundary,  where a delicate analysis within the framework of the theory of semi-groups and monotone operators \cite{Barbu3,Sh} was used to establish local well-posedness of weak solutions. Please refer to the papers \cite{BGRT,BRT,BRT-IMACS,GR,GR1,GR2,Pei3,RTW,RW} for more work on various hyperbolic PDEs under the influence of \emph{supercritical} nonlinearities. In particular, the local well-posedness of the viscoelastic wave equation (\ref{1.1}) with \emph{supercritical} sources and damping was established in \cite{visco-Guo-Ramm} by adopting the approach from \cite{BL1,GR}, and in addition, the extension to global solutions was studied in the case that the damping dominates the source. This manuscript is a follow-up work of \cite{visco-Guo-Ramm}, and we investigate the conditions under which the system (\ref{1.1}) blows up in finite time. We note here that equation (\ref{1.1}) is equipped with two types of dissipation: the linear memory $-\int_0^{\infty} k'(s) \Delta u(t-s) ds$ with $k'(s)<0$ as well as the frictional damping $|u_t|^{m-1}u_t$, but only one source term $|u|^{p-1}u$, therefore, it would be quite interesting to explore the mechanism of how the source surpasses the two dissipations leading to a blow-up of the system.

We would like to emphasize that our study of the blow-up of (\ref{1.1}) is under two different situations: negative initial energy as well as the positive initial energy. Also, it is important to notice that, in our model (\ref{1.1}), the source is \emph{supercritical} and the linear memory accounts for the full past history as time goes to $-\infty$, which is distinct from other relevant models studied in the literature which usually consider only subcritical sources ($1\leq p\leq 3$ in 3D) and a finite-time memory. In the proof of our results, we carefully justify all the formal calculations, and so  our work is fully rigorous. As a matter of fact, the analysis is quite involved and subtle, and in particular in the case of positive initial energy, due to the presence of the memory term in the equation, it is nontrivial to find an assumption on the upper bound of the initial energy.

\smallskip

\subsection{Review of well-posedness results in \cite{visco-Guo-Ramm}} In this subsection, we shall review the well-posedness results for (\ref{1.1}) obtained in our previous paper \cite{visco-Guo-Ramm} coauthored with Titi and Toundykov.
Throughout the paper, we use the notation
 $$\mathbb R^+=[0,\infty) \text{\;\;\;and\;\;\;} \mathbb R^-=(-\infty,0].$$
For the purpose of defining the proper function space for the initial data, we set $$\mu(s)=-k'(s).$$ Thus $\mu: \reals^+\longrightarrow \reals^+$, and in Assumption \ref{ass} below precise assumptions on $\mu$ will be imposed. We assume that the initial datum is a function $u_0(x,t)$ defined for negative times $t\in \mathbb R^-$ and in particular $u_0(x,t): \Omega\times \mathbb R^- \rightarrow \mathbb R$ belongs to a weighted Hilbert space $L^2_{\mu}(\mathbb R^-, H^1_0(\O))$, i.e.,
$$\|u_0\|^2_{L^2_{\mu}(\mathbb R^-, H^1_0(\O))}=\int_0^{\infty} \int_{\O} |\nabla u_0(x,-t)|^2 dx \mu(t) dt<\infty,$$
and $\partial_t u_0\in L^2_{\mu}(\reals^-,L^2(\O))$, that is,
\begin{align*}
\|\partial_t u_0\|^2_{L^2_{\mu}(\mathbb R^-, L^2(\O))}=\int_0^{\infty} \int_{\O} |\partial_t u_0(x,-t)|^2 dx \mu(t) dt<\infty.
\end{align*}
Also, the standard $L^s(\O)$-norm will be denoted by:
\begin{align*}
\norm{u}_s=\norm{u}_{L^s(\O)}.
\end{align*}

The following assumptions will be imposed throughout the manuscript.
\begin{assumption} \label{ass}\leavevmode
\begin{itemize}
\item ~~ $m\geq 1$, $1\leq p<6$, $p\frac{m+1}{m}<6$;
\item ~~ $k\in C^2(\reals^+)$ such that $k'(s)<0$ for all $s>0$ and $k(\infty)=1$;
\item ~~ $\mu(s)=-k'(s)$ such that $\mu \in C^1(\reals^+)\cap L^1(\reals^+)$ and $\mu'(s)\leq 0$ for all $s>0$, and $\mu(\infty)=0$;
\item ~~ $u_0(x,t)\in L^2_{\mu}(\reals^-,H^1_0(\O))$,\, $\partial_t u_0(x,t)\in L^2_{\mu}(\reals^-,L^2(\O))$ such that\\ $u_0: \mathbb R^- \rightarrow H^1_0(\O)$ and
$\partial_t u_0(x,t): \mathbb R^- \rightarrow L^2(\O)$ are weakly continuous at $t=0$. In addition, for all $t\leq 0$, $u_0(x,t)=0$ on $\G$.
\end{itemize}
\end{assumption}

We begin with giving the definition of a weak solution of (\ref{1.1}).
\begin{definition} \label{def-weak}
A function $u(x,t)$ is said to be a \emph{weak solution} of (\ref{1.1}) defined on the time interval $(-\infty,T]$
provided $u\in C([0,T];H^1_0(\O))$ such that $u_t\in C([0,T];L^2(\O))\cap L^{m+1}(\O \times (0,T))$ with:
\begin{itemize}
\item $u(x,t)=u_0(x,t)\in L^2_{\mu}(\reals^-,H^1_0(\O))$ for $t\leq 0$;
\item The following variational identity holds for all $t\in [0,T]$  and all test functions $\phi \in \mathscr{F}$:
\begin{align} \label{weak}
&\int_{\O}u_t(t)\phi(t) dx -  \int_{\O}u_t(0)\phi(0) dx   -\int_0^t \int_{\O}u_t(\t)\phi_t(\t)dx d\t\notag\\
&+k(0)\int_0^t \int_{\O} \nabla u(\t) \cdot \nabla \phi(\t) dx d\t
+\int_0^t \int_0^{\infty} \int_{\O} \nabla u(\t-s) \cdot \nabla \phi(\t) dx k'(s) ds d\t\notag\\
&+\int_0^t \int_{\O} |u_t(\tau)|^{m-1}u_t(\tau)\phi(\t) dx d\t
=\int_0^t \int_{\O} |u(\tau)|^{p-1}u(\tau) \phi(\t) dx d\t,
\end{align}
where $$ \mathscr{F} = \Big\{ \phi: \,\, \phi \in C([0,T];H^1_0(\O))\cap L^{m+1}(\O \times (0,T)) \text{  with  } \phi_t\in C([0,T];L^2(\O)) \Big\}.$$
\end{itemize}
\end{definition}

For the reader's convenience we summarize the main results obtained in \cite{visco-Guo-Ramm} which are relevant to the work in this paper.

\begin{theorem}[{\bf Short-time existence and uniqueness} \cite{visco-Guo-Ramm}] \label{thm-exist}
Assume the validity of the Assumption \ref{ass}, then there exists a local (in time) weak solution $u$ to (\ref{1.1}) defined on the time interval $(-\infty,T]$ for some $T>0$ depending on the initial quadratic energy $\mathscr E(0)$. Furthermore, the following energy identity holds:
\begin{align} \label{EI-0}
&\mathscr E(t)+\int_0^t \int_{\O} |u_t|^{m+1} dx d\t-\frac{1}{2}\int_0^t \int_0^{\infty} \norm{\nabla w(\t, s)}_2^2 \mu'(s)ds d\t \notag\\
&=\mathscr E(0)+\int_0^t \int_{\O} |u|^{p-1}u u_t dx d\t,
\end{align}
where the history function $w(x,\tau,s)$ and the quadratic energy $\mathscr E(t)$ are respectively defined by:
\begin{align}\label{1.6}
\begin{cases}
w(x,\t,s)=u(x,\t)-u(x,\t-s),\vspace{.1in}\\
\mathscr E(t)=\frac{1}{2}\left(\norm{u_t(t)}_2^2+\norm{\nabla u(t)}_2^2+\int_0^{\infty}\norm{\nabla w(t,s)}_2^2 \mu(s)ds \right).
\end{cases}
\end{align}
 If in addition we assume $u_0(0)\in L^{\frac{3(p-1)}{2}}(\O)$, then weak solutions of (\ref{1.1}) are unique.
\end{theorem}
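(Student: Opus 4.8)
The plan is to recast \eqref{1.1} as an abstract Cauchy problem governed by a maximal monotone operator, to add the supercritical source via a Lipschitz truncation, and then to pass to the limit by compactness. First I would remove the memory integral through the history variable $w(x,t,s)=u(x,t)-u(x,t-s)$ of \eqref{1.6}, which satisfies $w_t+w_s=u_t$ with $w(\cdot,t,0)=0$; since $\m(s)=-k'(s)$ and $\int_0^\infty\m(s)\,ds=k(0)-1$, one has $-\int_0^\infty k'(s)\Delta u(t-s)\,ds=-(k(0)-1)\Delta u+\int_0^\infty\m(s)\Delta w(t,s)\,ds$, so \eqref{1.1} is equivalent, on the phase space $\mathcal H=H^1_0(\O)\times L^2(\O)\times L^2_\m(\reals^+;H^1_0(\O))$ whose norm is $2\mathscr E$, to
$$U'+\mathbb A U+\mathbb B U=\mathbb F(U),\qquad U(0)=\bigl(u_0(0),\,\partial_t u_0(0),\,w_0\bigr),\quad w_0(s)=u_0(0)-u_0(-s),$$
where $U=(u,v,w)$, $\mathbb A U=\bigl(-v,\,-\Delta u-\int_0^\infty\m(s)\Delta w(s)\,ds,\,w_s-v\bigr)$ is the linear Dafermos viscoelasticity operator, $\mathbb B U=(0,\abs{v}^{m-1}v,0)$ the monotone damping, and $\mathbb F(U)=(0,\abs{u}^{p-1}u,0)$ the source. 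Integration by parts in $s$ gives $\innerprod{\mathbb A U,U}_{\mathcal H}=-\tfrac12\int_0^\infty\m'(s)\norm{\grad w(s)}_2^2\,ds\ge 0$ (using $w(\cdot,0)=0$, $\m(\infty)=0$, $\m'\le 0$), and the resolvent equation $(I+\mathbb A+\mathbb B)U=G$ is uniquely solvable by elliptic regularity, an ODE in $s$ for $w$, and monotone-operator surjectivity; since moreover $\mathbb B$ is the subdifferential of the proper convex lower semicontinuous functional $U\mapsto\tfrac1{m+1}\norm{v}_{m+1}^{m+1}$ on $\mathcal H$, the operator $-(\mathbb A+\mathbb B)$ is maximal monotone and generates a nonlinear contraction semigroup.

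Since $p$ may be \emph{supercritical}, $\mathbb F$ does not even map $\mathcal H$ into $\mathcal H$, so next I would truncate: for $K>0$ set $f_K(r)=\abs{r}^{p-1}r$ for $\abs{r}\le K$ and $f_K(r)=K^p\sgn(r)$ otherwise, so that $f_K$ is globally Lipschitz on $\reals$, $\mathbb F_K(U)=(0,f_K(u),0)$ is globally Lipschitz on $\mathcal H$, and $\abs{f_K(u)}\le\abs{u}^p$ pointwise. By the theory of Lipschitz perturbations of maximal monotone generators, $U'+\mathbb A U+\mathbb B U=\mathbb F_K(U)$ with the above data has a unique global mild solution $U_K\in C([0,\infty);\mathcal H)$, which is checked to be the weak solution of the truncated PDE. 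For smooth data the energy identity \eqref{EI-0} for $U_K$ --- with $\abs{u}^{p-1}u$ replaced by $f_K(u)$ --- is obtained by multiplying the equation by $u_t$ and carrying out the history computation (using $w(\cdot,0)=0$, $\m(\infty)=0$, $\int_0^\infty\m=k(0)-1$) that produces $\tfrac12\tfrac{d}{dt}\int_0^\infty\m(s)\norm{\grad w(t,s)}_2^2\,ds$ and $-\tfrac12\int_0^\infty\m'(s)\norm{\grad w(t,s)}_2^2\,ds$; density then extends it to the data in Assumption \ref{ass}. Using $\abs{f_K(u)}\le\abs{u}^p$, H\"older's inequality, the Sobolev embedding $H^1_0(\O)\hookrightarrow L^{p(m+1)/m}(\O)$ --- which is precisely where $p\tfrac{m+1}{m}<6$ enters --- and Young's inequality to absorb a small multiple of $\int_\O\abs{u_t}^{m+1}\,dx$ into the damping term, one gets, for the quadratic energy $\mathscr E_K$ of $u_K$, the inequality $\mathscr E_K(t)\le\mathscr E(0)+C\int_0^t\mathscr E_K(\t)^{\th}\,d\t$ with $\th=\tfrac{p(m+1)}{2m}$; comparison with the scalar ODE $y'=Cy^{\th}$ then yields $\sup_{[0,T]}\mathscr E_K(t)\le R$ on an interval $[0,T]$ with $T$ and $R$ depending only on $\mathscr E(0)$, uniformly in $K$.

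With these bounds, $u_K$ is bounded in $L^\infty(0,T;H^1_0(\O))$, $(u_K)_t$ in $L^\infty(0,T;L^2(\O))\cap L^{m+1}(\O\times(0,T))$, and $w_K$ in $L^\infty(0,T;L^2_\m(\reals^+;H^1_0(\O)))$; passing to a subsequence, $U_K\rightharpoonup U$ weakly-$*$, and by Aubin--Lions compactness $u_K\to u$ strongly in $C([0,T];L^q(\O))$ for every $q<6$, hence a.e.\ in $\O\times(0,T)$. Then $f_K(u_K)\to\abs{u}^{p-1}u$ a.e., and since $\{f_K(u_K)\}$ is bounded in $L^{(m+1)/m}(\O\times(0,T))$ (again from $\abs{f_K(u_K)}\le\abs{u_K}^p$ and $p\tfrac{m+1}{m}<6$), $f_K(u_K)\rightharpoonup\abs{u}^{p-1}u$ in $L^{(m+1)/m}(\O\times(0,T))$; the damping nonlinearity is identified by Minty's monotonicity trick, and the memory term passes to the limit by linearity, so $u$ is a weak solution of \eqref{1.1} on $[0,T]$ with $T=T(\mathscr E(0))$ --- the strong continuity demanded by Definition \ref{def-weak} following from the energy identity together with the weak continuity of $t\mapsto U(t)$. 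The energy identity \eqref{EI-0} for $u$ is then obtained by establishing it first for the strong, and hence the generalized, solutions of the abstract problem, where multiplying by $u_t$ is legitimate, and transferring it to the weak solution through the same approximation --- the nonnegative dissipation terms being weakly lower semicontinuous and the source term converging because of the strong convergence of $u_K$. I expect the rigorous justification of this identity, in the simultaneous presence of a supercritical source and an \emph{infinite} memory, to be the principal obstacle of the existence part.

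For uniqueness, assume additionally $u_0(0)\in L^{3(p-1)/2}(\O)$ and let $u,\tilde u$ be two weak solutions with the same data; put $z=u-\tilde u$, which vanishes for $t\le 0$ and satisfies $z(0)=z_t(0)=0$. Because $z_t$ is not an admissible test function, I would test the difference of the two variational identities with $z$ itself after a time-mollification, and use the monotonicity of $r\mapsto\abs{r}^{m-1}r$ for the difference of the damping terms. The one term not controlled by the energy of $z$ is the source difference, which by $\bigl|\,\abs{u}^{p-1}u-\abs{\tilde u}^{p-1}\tilde u\,\bigr|\le C\bigl(\abs{u}^{p-1}+\abs{\tilde u}^{p-1}\bigr)\abs{z}$, H\"older's inequality with exponents $\tfrac32$ and $3$, and $H^1_0(\O)\hookrightarrow L^6(\O)$ satisfies
$$\Bigl|\int_\O\bigl(\abs{u}^{p-1}u-\abs{\tilde u}^{p-1}\tilde u\bigr)z\,dx\Bigr|\le C\bigl(\norm{u}_{3(p-1)/2}^{p-1}+\norm{\tilde u}_{3(p-1)/2}^{p-1}\bigr)\norm{\grad z}_2^2,$$
which requires $\norm{u(t)}_{3(p-1)/2}$ and $\norm{\tilde u(t)}_{3(p-1)/2}$ to stay bounded on $[0,T]$: this propagation of the stronger integrability from the initial datum $u_0(0)$ is the crux of the uniqueness argument (and it is a genuine restriction only when $p\ge 5$). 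A Gronwall inequality for the functional combining $\innerprod{z_t(t),z(t)}$, $\norm{\grad z(t)}_2^2$, the memory energy $\int_0^\infty\m(s)\norm{\grad z(t,s)}_2^2\,ds$, and $\int_0^t\norm{z_t(\t)}_2^2\,d\t$ (all of which appear upon testing with $z$) then forces $z\equiv 0$.
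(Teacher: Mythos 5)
A point of reference first: the present paper does not prove Theorem \ref{thm-exist} at all --- it is imported from \cite{visco-Guo-Ramm}, whose proof runs along exactly the lines of your existence outline: rewrite the problem with the history variable $w(t,s)=u(t)-u(t-s)$ on the phase space $H^1_0(\O)\times L^2(\O)\times L^2_\mu(\reals^+;H^1_0(\O))$, treat the damping within the monotone-operator/semigroup framework of \cite{Barbu3,Sh} following \cite{BL1,GR}, truncate the supercritical source to a Lipschitz perturbation, derive local-in-time a priori bounds uniform in the truncation parameter (this is precisely where $p\frac{m+1}{m}<6$ enters), and pass to the limit. One caveat on your limit passage: weak lower semicontinuity of the dissipative terms only yields an energy \emph{inequality}; upgrading it to the identity (\ref{EI-0}) for weak solutions is the most delicate step of \cite{visco-Guo-Ramm}, and your sketch (rightly flagged as the principal obstacle) does not supply the missing argument.

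The concrete gap is in the uniqueness half. Testing the difference $z=u-\tilde u$ with $z$ itself defeats both of your stated mechanisms: the monotonicity of $r\mapsto|r|^{m-1}r$ produces a favorable sign only when the damping difference is paired with $z_t$, not with $z$; and the identity obtained from the test function $z$ has the structure $\frac{d}{dt}\int_\O z_t z\,dx-\|z_t\|_2^2+\|\nabla z\|_2^2+\cdots$, so a term $-\int_0^t\|z_t\|_2^2\,d\tau$ appears with the wrong sign and nothing in your Gronwall functional ($\int_\O z_t z\,dx$ is sign-indefinite, and neither $\|\nabla z\|_2^2$ nor the memory energy controls $z_t$) can absorb it; the inequality does not close. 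The uniqueness proofs in this literature (\cite{BL2,GR} and the one behind Theorem \ref{thm-exist} in \cite{visco-Guo-Ramm}) instead establish an energy-level estimate for $z$ involving $\|z_t\|_2^2+\|\nabla z\|_2^2$ plus the memory term --- effectively testing with $z_t$, justified by approximation/mollification --- where the damping monotonicity does help, and the source difference $\int_0^t\int_\O\big(|u|^{p-1}u-|\tilde u|^{p-1}\tilde u\big)z_t\,dx\,d\tau$ is handled by integration by parts in time; this is where the norm $\|u(t)\|_{3(p-1)/2}$ enters. Finally, the point you call ``the crux,'' propagating the $L^{\frac{3(p-1)}{2}}$ bound, is actually not an obstacle: for $p\le 5$ it is free from $H^1_0(\O)\hookrightarrow L^6(\O)$, while for $5\le p<6$ one has $(p-1)(6-p)\le 4$, hence $\frac{3(p-1)}{2}\le\frac{6}{6-p}<m+1$ by $p\frac{m+1}{m}<6$, so $\|u(t)\|_{3(p-1)/2}\le\|u_0(0)\|_{3(p-1)/2}+\int_0^t\|u_t(\tau)\|_{3(p-1)/2}\,d\tau<\infty$ follows from $u_t\in L^{m+1}(\O\times(0,T))$ and H\"older in time.
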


\vspace{0.05 in}

\begin{remark}
If we assume that $u(t)\in L^{p+1}(\O)$ for $t$ belonging to the lifespan $(-\infty,T]$ of the local solution (or instead assume $p\leq 5$), then the \emph{total energy} $E(t)$ of the system (\ref{1.1}) is defined by
\begin{align}  \label{to-ene}
E(t)&=\mathscr E(t) -  \frac{1}{p+1}\|u(t)\|_{p+1}^{p+1} \notag \\
&=\frac{1}{2}\left(\norm{u_t(t)}_2^2+\norm{\nabla u(t)}_2^2+\int_0^{\infty}\norm{\nabla w(t,s)}_2^2 \mu(s)ds \right)- \frac{1}{p+1}\|u(t)\|_{p+1}^{p+1}.
\end{align}
It is readily seen that, in terms of the total energy $E(t)$, the energy identity (\ref{EI-0}) can be written as
\begin{align} \label{EI-1}
E(t)+\int_0^t \int_{\O} |u_t|^{m+1} dx d\t-\frac{1}{2}\int_0^t \int_0^{\infty} \norm{\nabla w(\t, s)}_2^2 \mu'(s)ds d\t =E(0).
\end{align}
\end{remark}

The next result states that weak solutions of (\ref{1.1}) depend continuously on the initial data.
\begin{theorem}[{\bf Continuous dependence on initial data} \cite{visco-Guo-Ramm}]  \label{thm-cont}
In addition to the Assumption \ref{ass}, assume that  $u_0(0)\in L^{\frac{3(p-1)}{2}}(\O)$.
If $u_0^n\in L^2_{\mu}(\reals^-, H^1_0(\O))$ is a sequence of initial data such that $u_0^n\longrightarrow u_0$ in $L^2_{\mu}(\reals^-,H^1_0(\O))$ with
$u^n_0(0)\longrightarrow u_0(0)$ in $H^1_0(\O)$ and in $L^{\frac{3(p-1)}{2}}(\O)$, $\frac{d}{dt}u^n_0(0)\longrightarrow \frac{d}{dt}u_0(0)$ in $L^2(\O)$, then the corresponding weak solutions $u_n$ and $u$ of (\ref{1.1}) satisfy
\begin{align*}
u_n\longrightarrow u \text{\;\;in\;\;}  C([0,T];H^1_0(\O)) \text{\;\;\;and\;\;\;}
(u_n)_t\longrightarrow u_t  \text{\;\;in\;\;}  C([0,T];L^2(\O)).
\end{align*}
\end{theorem}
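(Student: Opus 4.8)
\smallskip
\noindent\emph{Proof plan.} The plan is to estimate the difference $z_n:=u_n-u$ of the two weak solutions (with data $u_0^n$ and $u_0$) by an energy method, supplemented by a compactness--uniqueness argument to handle the supercritical source. Subtracting the two copies of (\ref{1.1}), $z_n$ solves a linear viscoelastic wave equation forced by $|u_n|^{p-1}u_n-|u|^{p-1}u$ and by $-\bigl(|(u_n)_t|^{m-1}(u_n)_t-|u_t|^{m-1}u_t\bigr)$. Pairing (formally) with $(z_n)_t$ and integrating over $\O\times(0,t)$, the elliptic and memory terms assemble the difference quadratic energy
\[
\mathscr E_{z_n}(t)=\tfrac12\Bigl(\|(z_n)_t(t)\|_2^2+\|\nabla z_n(t)\|_2^2+\int_0^\infty\|\nabla w_{z_n}(t,s)\|_2^2\,\mu(s)\,ds\Bigr),\qquad w_{z_n}(t,s):=z_n(t)-z_n(t-s),
\]
together with the nonnegative memory dissipation $-\tfrac12\int_0^t\!\int_0^\infty\|\nabla w_{z_n}\|_2^2\,\mu'(s)\,ds\,d\tau\ge0$ (since $\mu'\le0$) and the damping term $\int_0^t\!\int_\O\bigl(|(u_n)_t|^{m-1}(u_n)_t-|u_t|^{m-1}u_t\bigr)\bigl((u_n)_t-u_t\bigr)\,dx\,d\tau$, which, by the monotonicity inequality for $r\mapsto|r|^{m-1}r$, dominates $c_m\int_0^t\|(z_n)_t\|_{m+1}^{m+1}\,d\tau$. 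Hence
\[
\mathscr E_{z_n}(t)+c_m\!\int_0^t\!\|(z_n)_t\|_{m+1}^{m+1}\,d\tau\;\le\;\mathscr E_{z_n}(0)+\int_0^t\!\int_\O\bigl(|u_n|^{p-1}u_n-|u|^{p-1}u\bigr)(z_n)_t\,dx\,d\tau.
\]

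The term $\mathscr E_{z_n}(0)$ tends to $0$: its velocity and gradient parts vanish by the hypotheses $\tfrac{d}{dt}u_0^n(0)\to\tfrac{d}{dt}u_0(0)$ in $L^2(\O)$ and $u_0^n(0)\to u_0(0)$ in $H^1_0(\O)$, while $\int_0^\infty\|\nabla w_{z_n}(0,s)\|_2^2\,\mu(s)\,ds\to0$ follows from $w_{z_n}(0,s)=(u_0^n(0)-u_0(0))-(u_0^n(-s)-u_0(-s))$ together with $u_0^n\to u_0$ in $L^2_\mu(\reals^-,H^1_0(\O))$. The crux is the source difference. Since $3<p<6$ is allowed, $u\mapsto|u|^{p-1}u$ is not locally Lipschitz from $H^1_0(\O)$ into $L^2(\O)$, so it cannot be paired against $(z_n)_t\in L^2(\O)$ in the naive way; instead I would use the pointwise bound $|\,|u_n|^{p-1}u_n-|u|^{p-1}u\,|\le C(|u_n|^{p-1}+|u|^{p-1})|z_n|$ and estimate it in $L^{\frac{m+1}{m}}(\O)$ by H\"older's inequality with $z_n\in L^6(\O)$ (Sobolev embedding) and $|u_n|^{p-1},|u|^{p-1}\in L^{q}(\O)$, where $\tfrac1q=1-\tfrac16-\tfrac1{m+1}$. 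The condition $(p-1)q\le6$, which makes these factors controllable by $\|u_n\|_{H^1_0}^{p-1}$ and $\|u\|_{H^1_0}^{p-1}$ (bounded on $[0,T]$), is \emph{precisely} the hypothesis $p\tfrac{m+1}{m}<6$. Pairing this $L^{\frac{m+1}{m}}(\O)$ bound against $(z_n)_t\in L^{m+1}(\O)$, applying Young's inequality, and absorbing the resulting $\|(z_n)_t\|_{m+1}^{m+1}$ into the damping term leaves the source integral bounded by $C_T\int_0^t\mathscr E_{z_n}(\tau)^{\frac{m+1}{2m}}\,d\tau$.

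When $m=1$ this is linear, and Gronwall's inequality gives $\mathscr E_{z_n}(t)\le\mathscr E_{z_n}(0)\,e^{C_Tt}$, so $u_n\to u$ in $C([0,T];H^1_0(\O))$ and $(u_n)_t\to u_t$ in $C([0,T];L^2(\O))$. For general $m\ge1$ the exponent $\tfrac{m+1}{2m}\le1$ is too small for a Gronwall argument alone to transmit the smallness of $\mathscr E_{z_n}(0)$, and I would close the argument by compactness and uniqueness, following \cite{visco-Guo-Ramm}: the solutions live on a common interval $[0,T]$ with $n$-uniform bounds in $C([0,T];H^1_0(\O))$, $C([0,T];L^2(\O))$ and $L^{m+1}(\O\times(0,T))$, so by the Aubin--Lions lemma a subsequence converges weakly-$*$ in $L^\infty(0,T;H^1_0(\O))$ and strongly in $C([0,T];L^{q}(\O))$ for every $q<6$; the source then passes to the limit because $|u_n|^{p-1}u_n\to|u|^{p-1}u$ in $C([0,T];L^{\frac{m+1}{m}}(\O))$ (again using $p\tfrac{m+1}{m}<6$), and the damping term passes by the Minty monotonicity trick together with the energy identity (\ref{EI-0}); thus the limit is a weak solution of (\ref{1.1}) with data $u_0$ and therefore equals $u$ by the uniqueness part of Theorem~\ref{thm-exist}, which is where $u_0(0)\in L^{\frac{3(p-1)}{2}}(\O)$ enters. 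Since every subsequential limit equals $u$, the full sequence converges weakly-$*$; finally, passing to the $\limsup$ in (\ref{EI-0}) and invoking weak lower semicontinuity of the three norms yields $\mathscr E_{u_n}(t)\to\mathscr E_u(t)$ for each $t$, which, combined with the weak convergence of $u_n(t)$ in $H^1_0(\O)$ and of $(u_n)_t(t)$ in $L^2(\O)$, upgrades to convergence of norms and hence to strong convergence; the $n$-uniform equicontinuity of $t\mapsto\mathscr E_{u_n}(t)$ supplied by (\ref{EI-0}) then promotes this to the asserted convergence in $C([0,T];H^1_0(\O))$ and $C([0,T];L^2(\O))$.

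The main difficulty I anticipate is the supercritical source, which is what forces one off a plain Gronwall estimate and onto the compactness route just sketched; the accompanying technical hurdle is that weak solutions are not regular enough to be used as their own multipliers, so the energy computation of the first step must be carried out on the regularized (Galerkin) approximations of \cite{visco-Guo-Ramm} --- where the full-history memory term requires particular care in the limiting passage --- and only afterwards passed to the limit.
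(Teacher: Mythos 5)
You should first note that the paper you are reading does not prove Theorem \ref{thm-cont} at all: Section 1.2 is a review, and this statement is imported verbatim from \cite{visco-Guo-Ramm}, so there is no in-paper proof to compare against. In the cited source the result is obtained by quantitative energy estimates on the difference of two solutions, in the same circle of ideas as the uniqueness argument, and the hypotheses $u_0(0)\in L^{\frac{3(p-1)}{2}}(\O)$ and $u_0^n(0)\to u_0(0)$ in $L^{\frac{3(p-1)}{2}}(\O)$ enter those estimates quantitatively; your plan never uses the $L^{\frac{3(p-1)}{2}}$ convergence (only membership, through uniqueness of the limit), which already signals that your route is genuinely different. That said, your opening energy estimate is sound as far as it goes: the monotonicity of $r\mapsto|r|^{m-1}r$, the sign of the memory dissipation, the treatment of $\mathscr E_{z_n}(0)$ including the history term, and the H\"older bookkeeping are correct --- indeed $(p-1)q\le 6$ with $\frac1q=\frac{m}{m+1}-\frac16$ is exactly $p\frac{m+1}{m}\le 6$ --- and you rightly observe that the resulting Bihari exponent $\frac{m+1}{2m}<1$ fails to transmit smallness of $\mathscr E_{z_n}(0)$ unless $m=1$.

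The genuine gap is in the compactness--uniqueness fallback, at the step ``the damping term passes by the Minty monotonicity trick together with the energy identity (\ref{EI-0}).'' To run Minty you need $\limsup_n\int_0^T\int_\O |(u_n)_t|^{m+1}\,dx\,dt \le \int_0^T\int_\O \chi v_t\,dx\,dt$ for the limit candidate $v$ and the weak $L^{\frac{m+1}{m}}$ limit $\chi$ of the damping terms, and that requires an energy identity (or at least the correct inequality) for $v$ with the unidentified flux $\chi$; you cannot obtain it by testing the limit equation with $v_t$, since $v_t$ is not an admissible test function --- this is precisely the obstruction you flagged at the outset, resurfacing. You only recover (\ref{EI-0}) for the limit after you know $v=u$, which in turn needs the damping identified, so the loop must be broken by some additional device (in \cite{visco-Guo-Ramm} it is broken by the monotone-operator construction of solutions, not by bare Galerkin compactness). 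Secondarily, the final upgrade to convergence in $C([0,T];H^1_0(\O))\times C([0,T];L^2(\O))$ needs more than the asserted ``$n$-uniform equicontinuity of $t\mapsto\mathscr E_{u_n}(t)$ supplied by (\ref{EI-0})'': a uniform $L^1_t$ bound on $\|(u_n)_t\|_{m+1}^{m+1}$ does not give equicontinuity of its time integral. One can instead get uniform convergence of the source work terms by H\"older in time (they are uniformly H\"older of exponent $\frac{m}{m+1}$) and then handle the monotone-in-$t$ dissipation terms by a Dini-type argument, or argue along sequences $t_n\to t$; and the history component $\int_0^\infty\|\nabla w_n(t,s)\|_2^2\mu(s)\,ds$ of $\mathscr E_{u_n}$ must be carried through the lower-semicontinuity and identification steps, which your sketch passes over.
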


The following result states: if the damping dominates the source term, then the solution is global.
\begin{theorem}[{\bf Global existence} \cite{visco-Guo-Ramm}] \label{thm-global}
In addition to Assumption \ref{ass}, further assume $u_0(0) \in L^{p+1}(\O)$. If $m\geq p$, then the weak solution of (\ref{1.1}) is global.
\end{theorem}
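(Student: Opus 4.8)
The plan is to argue by continuation from the local theory. By Theorem \ref{thm-exist} a weak solution exists on some interval $(-\infty,T]$ with $T>0$ determined by $\mathscr E(0)$; let $T^\ast\in(0,\infty]$ be the supremum of all such $T$. If $T^\ast<\infty$, then a standard restart argument — using that the existence time furnished by Theorem \ref{thm-exist} is bounded below as long as the quadratic energy at the restart time is bounded above — shows that $\mathscr E(t)$ must be unbounded as $t\uparrow T^\ast$. Hence it suffices to establish an a priori bound: on every finite interval on which the solution exists, $\mathscr E(t)$ stays finite, and moreover — in the borderline case $p=m$ — the interval on which such a bound is available has length bounded below by a structural constant.

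I would extract this bound from the energy identity \eqref{EI-0}. Since $\mu'\le 0$, the term $-\tfrac12\int_0^t\!\int_0^{\infty}\norm{\nabla w(\t,s)}_2^2\mu'(s)\,ds\,d\t$ is non-negative, and $\int_0^t\!\int_\O|u_t|^{m+1}\,dx\,d\t\ge 0$; dropping these from the left side of \eqref{EI-0} gives $\mathscr E(t)\le\mathscr E(0)+\int_0^t\!\int_\O|u|^{p-1}u\,u_t\,dx\,d\t$, while keeping the damping term instead gives $\int_0^t\!\int_\O|u_t|^{m+1}\,dx\,d\t\le\mathscr E(0)+\int_0^t\!\int_\O|u|^{p-1}u\,u_t\,dx\,d\t$. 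The point is \emph{not} to estimate the source by H\"older, Young and the Sobolev embedding $H^1_0(\O)\hookrightarrow L^{p(m+1)/m}(\O)$ (licensed by $p\frac{m+1}{m}<6$): that only yields $\mathscr E'(t)\lesssim\mathscr E(t)^{p(m+1)/(2m)}$, and since $p(m+1)/(2m)>1$ whenever $p>1$, this reproves nothing beyond local existence. Instead I would use the hypothesis $m\ge p$ through the identity
\begin{equation*}
\int_0^t\!\!\int_\O|u|^{p-1}u\,u_t\,dx\,d\t=\tfrac1{p+1}\big(\norm{u(t)}_{p+1}^{p+1}-\norm{u_0(0)}_{p+1}^{p+1}\big)\le\tfrac1{p+1}\norm{u(t)}_{p+1}^{p+1},
\end{equation*}
so that both $\mathscr E(t)$ and $\int_0^t\!\int_\O|u_t|^{m+1}\,dx\,d\t$ are bounded by $\mathscr E(0)+\tfrac1{p+1}\norm{u(t)}_{p+1}^{p+1}$.

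It then remains to control $\norm{u(t)}_{p+1}$, and here the essential observation is that, because $m\ge p$ and $\O$ is bounded, $L^{m+1}(\O)\hookrightarrow L^{p+1}(\O)$; writing $u(t)=u_0(0)+\int_0^t u_t(\t)\,d\t$ and applying H\"older in time,
\begin{equation*}
\norm{u(t)}_{p+1}\le\norm{u_0(0)}_{p+1}+c_\O\,t^{\frac m{m+1}}\Big(\int_0^t\!\!\int_\O|u_t|^{m+1}\,dx\,d\t\Big)^{\frac1{m+1}}
\end{equation*}
with $c_\O$ the embedding constant. Setting $Y(t)=\norm{u(t)}_{p+1}^{p+1}$ and feeding back the bound on $\int_0^t\!\int_\O|u_t|^{m+1}$, one arrives at $Y(t)\le C\big(\norm{u_0(0)}_{p+1}^{p+1}+t^{m(p+1)/(m+1)}(\mathscr E(0)+Y(t))^{(p+1)/(m+1)}\big)$, in which the exponent $(p+1)/(m+1)$ is $\le 1$. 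If $p<m$ it is $<1$, Young's inequality absorbs the $Y(t)$ on the right, and $Y(t)$ — hence $\mathscr E(t)$ — is finite on every bounded interval, so the solution is global. If $p=m$ one instead gets $Y(t)\big(1-\kappa t^{m}\big)\le C\big(\norm{u_0(0)}_{p+1}^{p+1}+t^m\mathscr E(0)\big)$ with $\kappa$ depending only on $p$, $m$, $|\O|$, which bounds $\mathscr E$ on $[0,T_0)$ for the fixed $T_0=\kappa^{-1/m}>0$; restarting the equation at $T_0/2$ — the history at $T_0/2$ again lies in the required $L^2_\mu$ spaces by monotonicity of $\mu$, and $u(T_0/2)\in L^{p+1}(\O)$ — advances the lifespan by the fixed amount $T_0/2$, and iterating reaches all of $[0,\infty)$.

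The main obstacle, and the reason the Gronwall route is a trap, is exactly this rearrangement of the source term so that $\norm{u(t)}_{p+1}$ is dominated by the time-integrated damping rather than by $\norm{\nabla u(t)}_2$; this is where $m\ge p$ is used essentially. A secondary but genuine point, because $p$ may be supercritical ($5<p<6$), is to justify rigorously for a weak solution that $u\in C([0,T];L^{p+1}(\O))$ and that the displayed identity holds: this follows from $u_t\in L^{m+1}(\O\times(0,T))\hookrightarrow L^{p+1}(\O\times(0,T))$, the hypothesis $u_0(0)\in L^{p+1}(\O)$, the fundamental theorem of calculus for the Bochner integral, and the chain rule for $t\mapsto\norm{u(t)}_{p+1}^{p+1}$. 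The memory term causes no difficulty here: its only role is that $-\mu'\ge 0$ keeps $-\tfrac12\int_0^t\!\int_0^{\infty}\norm{\nabla w}_2^2\mu'\,ds\,d\t$ non-negative in the already-established identity \eqref{EI-0}.
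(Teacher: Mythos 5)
This paper does not actually prove Theorem \ref{thm-global}: it is quoted from \cite{visco-Guo-Ramm}, so there is no in-text proof to compare against. Your argument is correct and is essentially the standard route for this result (and of the same type as in the cited source): justify and use the identity $\int_0^t\int_\O |u|^{p-1}u\,u_t\,dx\,d\tau=\frac{1}{p+1}\big(\|u(t)\|_{p+1}^{p+1}-\|u_0(0)\|_{p+1}^{p+1}\big)$, control $\|u(t)\|_{p+1}$ through $u(t)=u_0(0)+\int_0^t u_t\,d\tau$ together with $u_t\in L^{m+1}(\O\times(0,t))$ and the embedding $L^{m+1}(\O)\hookrightarrow L^{p+1}(\O)$ (this is precisely where $m\geq p$ and $u_0(0)\in L^{p+1}(\O)$ enter), feed back the damping bound coming from the energy identity \eqref{EI-0} with $\mu'\leq 0$, and conclude by the same continuation argument the paper itself uses around \eqref{Y-12}. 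Two minor quibbles, neither of which affects the proof: your parenthetical claim that $p(m+1)/(2m)>1$ whenever $p>1$ is false in general (e.g. $1<p<2$ with $m$ large), though you only invoke it to dismiss a route you do not take; and in the borderline case $p=m$ the inequality $Y(t)(1-\kappa t^m)\leq C\big(\|u_0(0)\|_{p+1}^{p+1}+t^m\mathscr E(0)\big)$ gives a bound that is uniform only on, say, $[0,T_0/2]$ rather than on all of $[0,T_0)$ --- which is exactly how you then use it in the restart, so the iteration goes through.
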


\smallskip

\subsection{Main results}
The main results of the paper consist of two theorems concerning the finite-time blow-up of the system (\ref{1.1}).  We prove these results for negative and positive initial energy when the source term is more dominant than the frictional damping as well as the dissipation from the delay.

Our first blow-up result deals with the case when the initial total energy $E(0)$ is negative. Specifically, we have the following theorem.
\begin{theorem}[{\bf Blow-up of solutions with negative initial energy}]   \label{thm-blow}
Assume the validity of the Assumption \ref{ass} and $E(0)<0$. If $p>\max\{m,\sqrt{k(0)}\}$, then the weak solution $u$ of (\ref{1.1}) blows up in finite time. More precisely, $\limsup_{t\rightarrow T_{max}^-}\|\nabla u(t)\|_2=\infty$, for some $T_{\max}\in (0,\infty)$.
\end{theorem}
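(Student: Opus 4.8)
\medskip
\noindent\emph{Proof plan.}\quad
The hypothesis $p>\max\{m,\sqrt{k(0)}\}$ forces in particular $m<p$, hence $p+1<p\frac{m+1}{m}<6$, so $\norm{u(t)}_{p+1}$ is finite along the solution and both the total energy \eqref{to-ene} and the energy identity \eqref{EI-1} are available. I would set $\mathcal H(t):=-E(t)$; by \eqref{EI-1} together with $\mu'\le 0$, the function $\mathcal H$ is absolutely continuous, nondecreasing, satisfies $\mathcal H'(t)\ge\norm{u_t(t)}_{m+1}^{m+1}\ge 0$, and $\mathcal H(t)\ge\mathcal H(0)=-E(0)>0$. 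The plan is then to run the classical Lyapunov/concavity argument: for a small $\alpha\in(0,1)$ and $\epsilon>0$ to be fixed below, put
\[
Z(t):=\mathcal H(t)^{1-\alpha}+\epsilon\int_{\O}u(t)\,u_t(t)\,dx ,
\]
and prove a differential inequality of the form $Z'(t)\ge\gamma\,Z(t)^{1/(1-\alpha)}$ with $\gamma>0$; integrating it forces $Z$, and with it $\norm{\grad u(t)}_2$, to become infinite in finite time.

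Since $u\in\mathscr F$, choosing $\phi=u$ in the variational identity \eqref{weak} shows that $t\mapsto\int_\O uu_t\,dx$ is absolutely continuous and, using $k(0)-\int_0^\infty\mu(s)\,ds=1$ and $\grad u(t-s)=\grad u(t)-\grad w(t,s)$, that for a.e.\ $t$
\[
\frac{d}{dt}\int_\O uu_t\,dx=\norm{u_t}_2^2-\norm{\grad u}_2^2-\int_0^\infty\!\mu(s)\!\int_\O\grad w(t,s)\cdot\grad u(t)\,dx\,ds-\int_\O|u_t|^{m-1}u_tu\,dx+\norm{u}_{p+1}^{p+1}.
\]
Next I would substitute, through \eqref{to-ene}, a fraction $1-\epsilon_0$ of the source term --- writing $\norm{u}_{p+1}^{p+1}=(p+1)\mathcal H(t)+\tfrac{p+1}{2}\big(\norm{u_t}_2^2+\norm{\grad u}_2^2+\int_0^\infty\norm{\grad w(t,s)}_2^2\mu(s)\,ds\big)$ --- while keeping $\epsilon_0\norm{u}_{p+1}^{p+1}$ intact, and bound the memory cross term, by Cauchy--Schwarz in $s$ (with weight $\mu$, noting $\int_0^\infty\mu(s)\,ds=k(0)-1$) and Young's inequality with parameter $\delta>0$, by $\tfrac{\delta}{2}(k(0)-1)\norm{\grad u}_2^2+\tfrac1{2\delta}\int_0^\infty\norm{\grad w(t,s)}_2^2\mu(s)\,ds$. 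After collecting terms, the coefficients of $\norm{\grad u}_2^2$ and of $\int_0^\infty\norm{\grad w}_2^2\mu\,ds$ become $\tfrac{(1-\epsilon_0)(p+1)}{2}-1-\tfrac{\delta(k(0)-1)}{2}$ and $\tfrac{(1-\epsilon_0)(p+1)}{2}-\tfrac1{2\delta}$; these can be made simultaneously nonnegative by a suitable choice of $\delta$ exactly when $(1-\epsilon_0)(p+1)\ge 1+\sqrt{k(0)}$, and this is precisely where $p>\sqrt{k(0)}$ enters --- it permits fixing a small $\epsilon_0>0$ (and then $\delta$) so that the inequality holds. One thus arrives at
\[
\frac{d}{dt}\int_\O uu_t\,dx\ \ge\ c_1\norm{u_t}_2^2+(1-\epsilon_0)(p+1)\,\mathcal H(t)+\epsilon_0\norm{u}_{p+1}^{p+1}-\int_\O|u_t|^{m-1}u_tu\,dx
\]
for some $c_1>0$, the remaining $\grad u$ and $w$ terms being nonnegative and discarded.

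For the damping term one uses that $\norm{u}_{p+1}^{p+1}\ge(p+1)\mathcal H(0)>0$ and $\mathcal H(t)\le\tfrac1{p+1}\norm{u}_{p+1}^{p+1}$, and, since $m+1<p+1$ on a bounded domain, $\norm{u}_{m+1}^{m+1}\le C\norm{u}_{p+1}^{m+1}\le C'\norm{u}_{p+1}^{p+1}$. Bounding $\abs{\int_\O|u_t|^{m-1}u_tu\,dx}\le\int_\O|u_t|^m|u|\,dx$ and applying Young's inequality with a \emph{time-dependent} parameter tuned so that the coefficient of $\norm{u_t}_{m+1}^{m+1}$ equals $\tfrac{1-\alpha}{2\epsilon}\mathcal H(t)^{-\alpha}$, the remainder is $\le C\epsilon^{m}\mathcal H(t)^{\alpha m}\norm{u}_{m+1}^{m+1}\le C''\epsilon^{m}\norm{u}_{p+1}^{p+1}$, the last bound being valid as soon as $0<\alpha\le\frac{p-m}{m(p+1)}$ (so that the exponent $(p+1)\alpha m+m+1\le p+1$). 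Writing $Z'=(1-\alpha)\mathcal H^{-\alpha}\mathcal H'+\epsilon\frac{d}{dt}\int_\O uu_t$ and invoking $\mathcal H'\ge\norm{u_t}_{m+1}^{m+1}$, the $\norm{u_t}_{m+1}^{m+1}$ contributions combine to a nonnegative term, and for $\epsilon$ small ($\epsilon_0-C''\epsilon^{m}\ge\epsilon_0/2$) one gets $Z'(t)\ge\epsilon c_4\big(\mathcal H(t)+\norm{u_t}_2^2+\norm{u}_{p+1}^{p+1}\big)$ with $c_4>0$. Imposing in addition $\alpha\le\frac{p-1}{2(p+1)}$ and using $\norm{u}_2\le C\norm{u}_{p+1}$ with Young's inequality gives $\abs{\int_\O uu_t\,dx}^{1/(1-\alpha)}\le C\big(\norm{u}_{p+1}^{p+1}+\norm{u_t}_2^2+\mathcal H(t)\big)$, hence $Z(t)^{1/(1-\alpha)}\le C\big(\mathcal H(t)+\norm{u_t}_2^2+\norm{u}_{p+1}^{p+1}\big)$. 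Fixing $\epsilon$ small enough that moreover $Z(0)=\mathcal H(0)^{1-\alpha}+\epsilon\int_\O u_0(0)\,\partial_tu_0(0)\,dx>0$, the last two estimates yield $Z'\ge\gamma Z^{1/(1-\alpha)}$ on $[0,T_{\max})$. Since $\mathcal H\ge\mathcal H(0)>0$, $Z$ is strictly increasing and stays positive; integrating $\frac{d}{dt}\big(Z^{-\alpha/(1-\alpha)}\big)\le-\frac{\alpha\gamma}{1-\alpha}$ shows $Z(t)\to\infty$ as $t$ tends to some finite $t^*\le T_{\max}$, so $T_{\max}<\infty$. Finally, a bound $\norm{\grad u(t)}_2\le M$ on $[0,T_{\max})$ would, via $E(t)\le E(0)<0$ and $H^1_0(\O)\hookrightarrow L^{p+1}(\O)$, bound $\mathscr E(t)$, and then the solution could be continued past $T_{\max}$ (the existence time in Theorem~\ref{thm-exist} depending only on $\mathscr E(0)$) --- a contradiction. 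Hence $\limsup_{t\to T_{\max}^-}\norm{\grad u(t)}_2=\infty$.

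The step I expect to be the main obstacle is the memory estimate: forcing the cross term generated by the full past-history Boltzmann memory to fit inside the energy identity while retaining a strictly positive share of the source, with the threshold $\sqrt{k(0)}$ emerging exactly as the compatibility condition between the two Young parameters $\delta$ and $\epsilon_0$. A second, more technical point is the time-dependent Young parameter $\sim\mathcal H(t)^{-\alpha}$ for the damping term, needed because $\mathcal H(t)$ is not a priori bounded; and throughout one must keep the formal manipulations rigorous for weak solutions, which is legitimate here because $u$ is itself an admissible test function in \eqref{weak} and the energy identity \eqref{EI-1} is supplied by Theorem~\ref{thm-exist}.
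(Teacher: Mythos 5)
Your proposal is correct and follows essentially the same route as the paper: the Georgiev--Todorova functional $\mathcal H^{1-\alpha}+\epsilon\int_\O uu_t\,dx$, the use of $u$ itself as an admissible test function in \eqref{weak}, a $\delta$-Young estimate on the memory cross term combined with the energy identity (your substitution of a $(1-\epsilon_0)$-fraction of the source is just a rearrangement of the paper's substitution of the memory quadratic term, and it produces the same threshold $p>\sqrt{k(0)}$), absorption of the damping via $\mathcal H'\ge\norm{u_t}_{m+1}^{m+1}$, the differential inequality $Z'\ge\gamma Z^{1/(1-\alpha)}$, and the continuation argument giving $\limsup_{t\to T_{\max}^-}\norm{\nabla u(t)}_2=\infty$. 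The only deviations are cosmetic: a time-dependent Young parameter $\sim\mathcal H(t)^{-\alpha}$ for the damping (the paper uses the weight $G(t)^{\frac1{p+1}-\frac1{m+1}}$ with a fixed parameter and monotonicity of $G$) and a slightly different admissible range for $\alpha$, both of which are sound.
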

\begin{remark}  \label{rem}
Although the well-posedness results, Theorems \ref{thm-exist} and \ref{thm-cont}, allow the growth rate $p$ of the source term to take any value in $[1,6)$, nevertheless the assumptions in Theorem \ref{thm-blow} force the restriction $p<5$. Indeed, if we combine the assumptions $p>m$ and $p\frac{m+1}{m}<6$ from Assumption \ref{ass}, we find that
$6>p(1+\frac{1}{m})>p(1+\frac{1}{p})=p+1$, which implies that $p<5$.
\end{remark}

Our second blow-up result is concerned with the case that the initial total energy is nonnegative. For a given $p\in (1,5]$, let $\gamma>0$ be the best constant for the Sobolev inequality $\|u\|_{p+1} \leq \gamma \|\nabla u\|_2$ for all $u\in H^1_0(\O)$, i.e.,
\begin{align}  \label{em}
\gamma^{-1}=\inf \left\{\norm{\nabla u}_2: u\in H^1_0(\O),  \|u\|_{p+1}=1 \right\}.
\end{align}
Then, we have the following result.
\begin{theorem}[{\bf Blow-up of solutions with positive initial energy}]   \label{thm-blow2}
 In addition to the validity of the Assumption \ref{ass}, we assume that $p>\max\{m,\sqrt{k(0)}\}$. Further assume that $\mathscr E(0) > y_0:=\frac{1}{2} \gamma^{-\frac{2(p+1)}{p-1}}$ and
 \begin{align} \label{def-M}
0\leq E(0)<M:=(\sqrt{k(0)}+1)^{\frac{2}{p-1}}  (2\gamma^2)^{-\frac{p+1}{p-1}}\left(\frac{p-\sqrt{k(0)}}{p+1}\right).
\end{align}
Then, the weak solution $u$ of the system (\ref{1.1}) blows up in finite time. More precisely,
$\limsup_{t\rightarrow T^-_{max}}\|\nabla u(t)\|_2 = \infty$, for some $T_{\max}\in (0,\infty)$.
\end{theorem}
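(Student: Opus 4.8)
The plan is to combine the concavity / differential–inequality method of Georgiev and Todorova \cite{G-T} (adapted to supercritical sources as in \cite{visco-Guo-Ramm}) with a potential–well type a priori estimate which, because $E(0)$ is now allowed to be positive, is needed to keep the solution in the unstable regime for its whole life span; isolating the correct memory–dependent threshold $M$ is the delicate point. \textbf{Set-up.} Since $\mu'\le 0$, the energy identity (\ref{EI-1}) gives $E'(t)=-\int_{\O}|u_t|^{m+1}dx+\tfrac12\int_0^{\infty}\mu'(s)\norm{\nabla w(t,s)}_2^2ds\le 0$, so $E(t)\le E(0)<M$ on the maximal interval $[0,T_{\max})$. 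Put $H(t):=M-E(t)$: then $H$ is nondecreasing, $H(t)\ge M-E(0)>0$, and $H'(t)\ge\int_{\O}|u_t|^{m+1}dx\ge 0$. With small parameters $\alpha\in\bigl(0,\tfrac{p-1}{2(p+1)}\bigr]$ and $\epsilon>0$, set
\[
\mathcal G(t)=H(t)^{1-\alpha}+\epsilon\int_{\O}u(t)\,u_t(t)\,dx .
\]
The aim is to prove $\mathcal G'(t)\ge C\,\mathcal G(t)^{\frac1{1-\alpha}}$ on $[0,T_{\max})$; since $\mathcal G(0)>0$ for $\epsilon$ small and $\mathcal G'>0$ keeps $\mathcal G$ positive, this forces $T_{\max}<\infty$, and using $H(t)\le M+\tfrac1{p+1}\norm{u}_{p+1}^{p+1}$, $\norm{u_t}_2^2\le 2E(0)+\tfrac2{p+1}\norm{u}_{p+1}^{p+1}$ and $\norm{u}_{p+1}\le\gamma\norm{\nabla u}_2$ one gets $\mathcal G(t)\le C(1+\norm{\nabla u(t)}_2^{\kappa})$, so the blow-up of $\mathcal G$ yields $\limsup_{t\to T_{\max}^-}\norm{\nabla u(t)}_2=\infty$. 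All differentiations below and the use of (\ref{EI-1}) must first be justified for the weak solution of Definition \ref{def-weak}, by approximating with the regular solutions of \cite{visco-Guo-Ramm} and passing to the limit.

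\textbf{The a priori estimate.} The first key step is to show $\mathscr E(t)>y_0$ for all $t\in[0,T_{\max})$. It holds at $t=0$ by hypothesis; if it failed, continuity of $t\mapsto\mathscr E(t)$ (from (\ref{EI-0})) produces a first time $t_0$ with $\mathscr E(t_0)=y_0$, whence $\norm{\nabla u(t_0)}_2^2\le 2\mathscr E(t_0)=2y_0=\gamma^{-\frac{2(p+1)}{p-1}}$, so by the Sobolev inequality (\ref{em}), $\norm{u(t_0)}_{p+1}^{p+1}\le\gamma^{p+1}\norm{\nabla u(t_0)}_2^{p+1}\le 2y_0$, and hence
\[
E(t_0)=\mathscr E(t_0)-\tfrac1{p+1}\norm{u(t_0)}_{p+1}^{p+1}\ \ge\ y_0-\tfrac{2y_0}{p+1}\ =\ \tfrac{p-1}{2(p+1)}\gamma^{-\frac{2(p+1)}{p-1}}\ \ge\ M,
\]
contradicting $E(t_0)\le E(0)<M$. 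The final inequality $\tfrac{p-1}{2(p+1)}\gamma^{-2(p+1)/(p-1)}\ge M$ is the elementary statement that $M$ does not exceed the classical potential–well depth — this is where the precise (memory–dependent) value of $M$ enters. A consequence is the uniform lower bound $\norm{u(t)}_{p+1}^{p+1}=(p+1)(\mathscr E(t)-E(t))\ge(p+1)(\mathscr E(t)-E(0))>(p+1)(y_0-M)>0$.

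\textbf{The functional estimate.} Testing the equation, rewritten (using $\int_0^{\infty}\mu(s)ds=k(0)-1$) as $u_{tt}-\Delta u-\int_0^{\infty}\mu(s)\Delta w(t,s)ds+|u_t|^{m-1}u_t=|u|^{p-1}u$, with $u$ gives
\[
\tfrac{d}{dt}\!\int_{\O}u u_t\,dx=\norm{u_t}_2^2-\norm{\nabla u}_2^2-\int_0^{\infty}\!\!\mu(s)\!\int_{\O}\nabla w(t,s)\cdot\nabla u\,dx\,ds-\int_{\O}|u_t|^{m-1}u_t u\,dx+\norm{u}_{p+1}^{p+1}.
\]
Cauchy–Schwarz (again with $\int_0^{\infty}\mu\,ds=k(0)-1$) and Young's inequality with parameter $\sqrt{k(0)}-1$ yield $-\norm{\nabla u}_2^2-\int_0^{\infty}\!\mu(s)\!\int_{\O}\nabla w\cdot\nabla u\,dx\,ds\ge-\tfrac{\sqrt{k(0)}+1}{2}\bigl(\norm{\nabla u}_2^2+\int_0^{\infty}\mu(s)\norm{\nabla w(t,s)}_2^2ds\bigr)=-\tfrac{\sqrt{k(0)}+1}{2}(2\mathscr E(t)-\norm{u_t}_2^2)$; inserting this together with $\mathscr E(t)=M-H(t)+\tfrac1{p+1}\norm{u}_{p+1}^{p+1}$ leads to
\[
\tfrac{d}{dt}\!\int_{\O}u u_t\,dx\ \ge\ \tfrac{3+\sqrt{k(0)}}{2}\norm{u_t}_2^2+\tfrac{p-\sqrt{k(0)}}{p+1}\norm{u}_{p+1}^{p+1}+(1+\sqrt{k(0)})H(t)-(1+\sqrt{k(0)})M-\int_{\O}|u_t|^{m-1}u_t u\,dx.
\]
The coefficient $\tfrac{p-\sqrt{k(0)}}{p+1}$ is positive precisely because $p>\sqrt{k(0)}$ — this is where that hypothesis is used. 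Combining with the a priori lower bound on $\norm{u(t)}_{p+1}^{p+1}$ from the previous step and, crucially, with the sharp value of $M$ (it is here that the full–history memory must be carefully accounted for, and this is the technically most involved point), one absorbs the constant $-(1+\sqrt{k(0)})M$ and obtains $\tfrac{d}{dt}\int_{\O}u u_t\,dx\ge c_0(\norm{u_t}_2^2+\norm{\nabla u}_2^2+\norm{u}_{p+1}^{p+1}+H(t))-\int_{\O}|u_t|^{m-1}u_t u\,dx$ for some $c_0>0$.

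\textbf{Conclusion.} For the damping term, $|\int_{\O}|u_t|^{m-1}u_t u|\le\eta\norm{u}_{m+1}^{m+1}+C_\eta\norm{u_t}_{m+1}^{m+1}$; since $p>m$, the first term is of strictly lower order ($m+1<p+1$) and is absorbed into $c_0\norm{u}_{p+1}^{p+1}$ for $\eta$ small, while $\norm{u_t}_{m+1}^{m+1}\le H'(t)$ is absorbed into the term $(1-\alpha)H(t)^{-\alpha}H'(t)$ already present in $\mathcal G'$, via a weighted Young inequality with weight a negative power of $H$, modulo a lower–order remainder. Thus $\mathcal G'(t)\ge c_1(H(t)+\norm{u_t}_2^2+\norm{\nabla u}_2^2+\norm{u}_{p+1}^{p+1})$ for $\epsilon,\alpha$ small. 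On the other hand $\mathcal G(t)^{\frac1{1-\alpha}}\le C(H(t)+|\int_{\O}u u_t|^{\frac1{1-\alpha}})$, and by Young's and Sobolev's inequalities with $\alpha\le\tfrac{p-1}{2(p+1)}$ one has $|\int_{\O}u u_t|^{\frac1{1-\alpha}}\le C(\norm{u_t}_2^2+\norm{u}_{p+1}^{p+1}+1)$, so $\mathcal G(t)^{\frac1{1-\alpha}}\le C(H(t)+\norm{u_t}_2^2+\norm{\nabla u}_2^2+\norm{u}_{p+1}^{p+1})$. Comparing the two inequalities gives $\mathcal G'(t)\ge C\mathcal G(t)^{\frac1{1-\alpha}}$, hence $\mathcal G(t)$ — and therefore $\norm{\nabla u(t)}_2$ — blows up at a finite $T_{\max}$. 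The main obstacle will be the a priori estimate together with the sharp identification of $M$ used to close the functional estimate, both forced by the memory term; this matches the authors' own remark that finding the admissible upper bound on the initial energy is nontrivial.
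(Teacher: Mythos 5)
The overall Liapunov structure you set up (the functional $H^{1-\alpha}+\epsilon\int_\O u u_t\,dx$, the choice $\delta=\frac{\sqrt{k(0)}+1}{2}$, the treatment of the damping term) is the same as the paper's, but there is a genuine gap at exactly the point you flag as ``technically most involved'' and then merely assert: the absorption of the constant $-(1+\sqrt{k(0)})M$ (equivalently of $-(1+\sqrt{k(0)})E(t)$). Your a priori estimate only gives $\mathscr E(t)>y_0$, hence $\|u(t)\|_{p+1}^{p+1}\ge(p+1)\bigl(\mathscr E(t)-E(0)\bigr)>(p+1)\bigl(y_0-E(0)\bigr)$. Plugging this into your functional inequality, even if you spend the entire coefficient $\frac{p-\sqrt{k(0)}}{p+1}$ and all of $(1+\sqrt{k(0)})H(t)\ge(1+\sqrt{k(0)})\bigl(M-E(0)\bigr)$, the absorption requires $(p-\sqrt{k(0)})\bigl(y_0-E(0)\bigr)\ge(1+\sqrt{k(0)})E(0)$, i.e. $E(0)\le\frac{p-\sqrt{k(0)}}{p+1}\,y_0$. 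But the theorem allows $E(0)$ up to $M=\frac{p-\sqrt{k(0)}}{p+1}\,y^*$ with $y^*=(\sqrt{k(0)}+1)^{\frac{2}{p-1}}(2\gamma^2)^{-\frac{p+1}{p-1}}>y_0$ strictly (because $k(0)>1$), so for $E(0)\in\bigl(\frac{p-\sqrt{k(0)}}{p+1}y_0,\,M\bigr)$ your inequality cannot be closed by any choice of small $c_0,\epsilon,\eta$. Put differently, at the level $y_0$ the Sobolev-saturating lower bound is only $\|u\|_{p+1}^{p+1}\gtrsim(2\gamma^2y_0)^{\frac{p+1}{2}}=2y_0$, and $2y_0<(\sqrt{k(0)}+1)y_0$ as soon as the memory is present; beating the factor $\sqrt{k(0)}+1$ is precisely why the threshold must be raised from $y_0$ to $y^*$, and your first-crossing argument at level $y_0$ discards exactly that quantitative information.

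What the paper does instead: since $0\le E(0)<M=F(y^*)$ and $F$ is decreasing on $(y_0,\infty)$, there is a unique $y_1>y^*$ with $F(y_1)=E(0)$; combining $E(t)\ge F(\mathscr E(t))$ with $E(t)\le E(0)=F(y_1)$ shows $\mathscr E(t)$ can never enter the band $(y_0,y_1)$, and since $\mathscr E(0)>y_0$ and $\mathscr E$ is continuous, in fact $\mathscr E(t)\ge y_1$ for all $t\in[0,T_{max})$ --- a strictly stronger invariance than yours. This yields $\|u(t)\|_{p+1}^{p+1}\ge C_0:=(2\gamma^2y_1)^{\frac{p+1}{2}}$, and the role of $y^*$ is precisely that $C_0>(\sqrt{k(0)}+1)y_1$; the splitting (\ref{split}) and the estimate (\ref{pk1}) then produce the uniform margin $c=\frac{C_0-(\sqrt{k(0)}+1)y_1}{2C_0}>0$ that dominates $(\sqrt{k(0)}+1)E(t)$ for the whole admissible range $E(0)<M$. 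Two secondary points: finiteness of $T_{max}$ alone does not give $\limsup_{t\to T_{max}^-}\|\nabla u(t)\|_2=\infty$ --- you still need the continuation argument (iterating the local existence theorem, whose existence time depends only on $\mathscr E$) to conclude $\limsup_{t\to T_{max}^-}\mathscr E(t)=\infty$, as the paper does; and the $\|\nabla u\|_2^2$ term in your claimed lower bound for $\frac{d}{dt}\int_\O u u_t\,dx$ is not actually available (its coefficient vanishes after choosing $\delta=\frac{\sqrt{k(0)}+1}{2}$), though it is not needed in the final comparison.
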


The remainder of the manuscript is organized as follows. Section \ref{S2} is devoted to the proof of Theorem \ref{thm-blow}, where we show blow-up of the weak solution to (\ref{1.1}) when the initial total energy is \emph{negative} and the source dominates the frictional damping and the dissipation due to the memory term. In Section \ref{S3}, we present the proof of Theorem \ref{thm-blow2} which contains a finite-time blow-up result in the case of \emph{positive} initial total energy and with sufficiently large quadratic energy.

\bigskip

\section{Proof of Theorem \ref{thm-blow}} \label{S2}
This section is devoted to proving the blow-up of weak solutions to the viscoelastic wave equation (\ref{1.1}) when the total energy is negative.
In particular, we shall present a rigorous proof of Theorem \ref{thm-blow}, which states that, if the initial energy $E(0)$ is negative and the source dominates dissipation in the system, i.e. $p>\max\{m,\sqrt{k(0)}\}$, then the weak solution of (\ref{1.1}) blows up in finite time.
\begin{proof}
 Let $u$ be a weak solution of the system (\ref{1.1}) in the sense of Definition \ref{def-weak}. We define the life span $T_{max}$ of the solution $u$ to be the supremum of all $T>0$ such that $u$ is a solution of (\ref{1.1}) on $(-\infty,T]$. We aim to show that $T_{max}$ is necessarily finite, that is, $u$ blows up in finite time.

The main idea of the proof is due to \cite{G-T} (see also \cite{Levine-Serrin,Messaoudi1}).
One major contribution of the paper \cite{G-T} was the choice of a special Liapunov's function for the purpose of proving the blow-up result.
Indeed, we put $G(t)=-E(t)$ and $N(t)=\frac{1}{2}\|u(t)\|_2^2$. We aim to show
\begin{align} \label{def-Y}
Y(t)=G(t)^{1-\alpha}+\epsilon N'(t)
\end{align}
blows up in finite time, for some $\alpha\in (0,1)$ and $\epsilon>0$, which will be selected later.

We calculate
\begin{align*}
Y'(t)=(1-\alpha)G(t)^{-\alpha}G'(t)+\epsilon N''(t).
\end{align*}
By the definition of weak solutions, i.e, Definition \ref{def-weak}, we find the regularity of weak solutions: $u\in  C([0,T];H^1_0(\O))$ such that $u_t\in C([0,T];L^2(\O))\cap L^{m+1}(\O \times (0,T))$ for $0<T<T_{max}$. Clearly one has
$$N'(t)=\int_{\O} uu_t dx.$$
Also, \emph{formally} we have $N''(t)=\frac{d}{dt}\int_{\O}  u u_t dx = \|u_t\|_2^2 + \int_{\O} u u_{tt} dx$. However, because of the lack of the regularity of $u_{tt}$, such formal calculation is not legitimate. In order to bypass this obstacle, we resort to the variational identity (\ref{weak}) and we would like to use $u$ as a test function in place of $\phi$ in (\ref{weak}) in order to obtain an identity for $N'(t)$. To proceed in this direction, we shall check that whether $u$ belongs to the admissible set $\mathscr F$ of test functions $\phi$. By the regularities of weak solutions, we know that $u\in C([0,T];H^1_0(\O))$ and $u_t \in C([0,T];L^2(\O))$ for $0<T<T_{\max}$, and this immediately implies that $u\in L^{m+1}(\O \times (0,T))$ since $H^1_0(\O) \hookrightarrow L^{m+1}(\O)$ due to $m<p<5$ by Remark \ref{rem}. Hence, the solution $u$ enjoys the regularity restrictions imposed on the test functions in $\mathscr F$, as stated in Definition \ref{def-weak}. As a result, we may replace $\phi$ by $u$ in the variational identity (\ref{weak}) to obtain
\begin{align}  \label{deN}
N'(t) &=\int_{\O}u_t(0)u(0) dx   +\int_0^t \|u_t(\tau)\|_2^2 d\t \notag\\
&\hspace{0.4 in}-k(0)\int_0^t  \|\nabla u(\t)\|^2_2  d\t
-\int_0^t \int_0^{\infty} \int_{\O} \nabla u(\t-s) \cdot \nabla u(\t) dx k'(s) ds d\t\notag\\
&\hspace{0.4 in}-\int_0^t \int_{\O} |u_t(\tau)|^{m-1}u_t(\tau)u(\t) dx d\t
+\int_0^t \|u(\tau)\|^{p+1}_{p+1}  d\t \notag\\
&=\int_{\O}u_t(0)u(0) dx   +\int_0^t \|u_t(\tau)\|_2^2 d\t \notag\\
&\hspace{0.4 in}-\int_0^t  \|\nabla u(\t)\|^2_2  d\t + \int_0^t \int_0^{\infty} \int_{\O} \nabla w(\t,s) \cdot \nabla u(\t) dx k'(s) ds d\t\notag\\
&\hspace{0.4 in}-\int_0^t \int_{\O} |u_t(\tau)|^{m-1}u_t(\tau)u(\t) dx d\t
+\int_0^t \|u(\tau)\|^{p+1}_{p+1}  d\t,
\end{align}
for all $t\in [0,T_{max})$, where we have used $w(x,t,s)=u(x,t)-u(t-s)$ as well as the assumption $k(\infty)=1$.

In order to differentiate $N'(t)$, we shall verify that $N'(t)$ is absolutely continuous on any closed subsegment of $[0,T_{max})$. By the assumptions $k'(s)<0$, $\mu(s)=-k'(s)>0$ and $k(\infty)=1$, one has
\begin{align}
&\int_0^t \left|\int_0^{\infty} \int_{\O} \nabla w(\tau,s) \cdot \nabla u(\tau) dx k'(s) ds \right| d\tau \notag\\
&\leq  \int_0^t \int_0^{\infty}  \|\nabla w(\tau,s)\|_2^2 \mu(s) ds d\tau
- \int_0^t \int_0^{\infty}  \|\nabla u(\tau) \|_2^2  k'(s) ds d\tau     \notag\\
&\leq   2\int_0^t \mathscr E(\tau)  d\tau
+(k(0)-1)\int_0^t \|\nabla u(\tau)\|_2^2 d\tau<\infty,
\end{align}
for all $t\in [0,T_{max})$, where we have used the fact that $u\in C([0,t];H^1_0(\O))$ and $\mathscr E(t)$ is continuous due to the energy identity (\ref{EI-0}). Also, by H\"older's and Young's inequalities, one has
\begin{align*}
\int_0^t \left|\int_{\O} |u_t(\tau)|^{m-1}u_t(\tau)u(\t) dx\right| d\t
&\leq \int_0^t \|u_t\|_{m+1}^m \|u\|_{m+1} d\t   \notag\\
&\leq C\int_0^t \|u_t\|_{m+1}^{m+1} d\tau + C\int_0^t \|u\|_{m+1}^{m+1} d\tau < \infty,
\end{align*}
for all $t\in [0,T_{max})$, since $u_t \in L^{m+1}(\Omega \times (0,t))$ and $u\in C([0,t];H^1_0(\O))$ as well as the
imbedding $H^1(\O) \hookrightarrow L^{m+1}(\O)$ due to the fact $m<p<5$ from Remark \ref{rem}.
Therefore,  $N'(t)$ is absolutely continuous on any closed subsegment of $[0,T_{max})$. Thus, we may differentiate again in (\ref{deN}) to obtain:
\begin{align}   \label{doub}
N''(t)= &\|u_t\|_2^2  - \|\nabla u\|_2^2 + \int_0^{\infty} k'(s) \int_{\O} \nabla u(t) \cdot \nabla w(t,s) dx ds   \notag\\
&- \int_{\O}  |u_t|^{m-1} u_t u dx + \|u\|_{p+1}^{p+1},  \text{\;\;\;\;\;for all\;\;} t\in  [0,T_{max}).
\end{align}

The next step is to find an appropriate lower bound of right-hand side of (\ref{doub}). Indeed, by using the Cauchy-Schwarz and Young's inequalities, and the assumption $\mu(s)=-k'(s)>0$, one has
\begin{align} \label{ks}
&\left|\int_0^{\infty} k'(s) \int_{\O} \nabla u(t) \cdot \nabla w(t,s) dx ds\right|     \notag\\
&\leq  \int_0^{\infty} (-k'(s)) \left( \frac{1}{4\delta} \|\nabla u(t)\|_2^2  + \delta \|\nabla w(t,s)\|^2_2  \right) ds  \notag\\
&\leq \frac{k(0)-1}{4\delta} \|\nabla u\|_2^2  + \delta \int_0^{\infty}   \|\nabla w(t,s)\|^2_2  \mu(s)  ds,
\end{align}
for some $\delta>0$ whose value will be selected later.  Also, by applying H\"older's and Young's inequalities, and using the assumption that the source is stronger than the damping, i.e. $p>m$, we obtain
\begin{align} \label{ut}
\left|\int_{\O} u |u_t|^{m-1} u_t dx \right|\leq \int_{\O} |u||u_t|^m dx \leq \|u\|_{m+1} \|u_t\|_{m+1}^m
\leq C\|u\|_{p+1} \|u_t\|_{m+1}^m.
\end{align}
Since $G(t)=-E(t)$ and $\mu'(s)\leq 0$, (\ref{EI-1}) implies
\begin{align}  \label{Gprime}
G'(t)= \|u_t\|_{m+1}^{m+1}  -\frac{1}{2}  \int_0^{\infty} \norm{\nabla w(t, s)}_2^2 \mu'(s)ds  \geq \|u_t\|_{m+1}^{m+1} \geq 0.
\end{align}
Thus, $G(t)$ is nondecreasing for $t\in [0,T_{max})$. Moreover, by (\ref{to-ene}),
\begin{align} \label{GG}
G(t)=-E(t)= -\mathscr E(t) +  \frac{1}{p+1}\|u(t)\|_{p+1}^{p+1} \leq \frac{1}{p+1} \|u(t)\|_{p+1}^{p+1}.
\end{align}

Now, by applying (\ref{GG}) to inequality (\ref{ut}) and invoking the assumption $p>m$, we deduce that
\begin{align}   \label{ut-0}
\left|\int_{\O} u |u_t|^{m-1} u_t dx \right|
&\leq   C \|u\|^{1-\frac{p+1}{m+1}}_{p+1}   \left(\|u\|_{p+1}^{\frac{p+1}{m+1}} \|u_t\|_{m+1}^m\right) \notag\\
&\leq   C G(t)^{\frac{1}{p+1}-\frac{1}{m+1}}  \left(\|u\|_{p+1}^{\frac{p+1}{m+1}} \|u_t\|_{m+1}^m\right)    \notag\\
&\leq  \lambda G(t)^{\frac{1}{p+1}-\frac{1}{m+1}} \|u\|_{p+1}^{p+1} +  C_{\lambda}  G(t)^{\frac{1}{p+1}-\frac{1}{m+1}}  \|u_t\|_{m+1}^{m+1},
\end{align}
where we have used the Young's inequality and the value of the positive number $\lambda$ will be determined later. By selecting $$0<\alpha<\frac{1}{m+1}-\frac{1}{p+1}$$ and using (\ref{Gprime}), we obtain
\begin{align}  \label{ut-1}
\left|\int_{\O} u |u_t|^{m-1} u_t dx \right|
&\leq \lambda G(t)^{\frac{1}{p+1}-\frac{1}{m+1}} \|u\|_{p+1}^{p+1} +  C_{\lambda}  G(t)^{\frac{1}{p+1}-\frac{1}{m+1}+\alpha} G(t)^{-\alpha}  \|u_t\|_{m+1}^{m+1} \notag\\
&\leq \lambda G(0)^{\frac{1}{p+1}-\frac{1}{m+1}} \|u\|_{p+1}^{p+1} +  C_{\lambda}  G(0)^{\frac{1}{p+1}-\frac{1}{m+1}+\alpha} G(t)^{-\alpha}  G'(t).
\end{align}

By employing (\ref{ks}) and (\ref{ut-1}), we obtain from (\ref{doub}) that
\begin{align}  \label{doub-1}
N''(t) \geq &\|u_t\|_2^2 - \left(\frac{k(0)-1}{4\delta}+1\right) \|\nabla u\|_2^2 - \delta \int_0^{\infty} \norm{\nabla w(t,s)}^2_2 \mu(s)ds \notag\\
&+(1-\lambda G(0)^{\frac{1}{p+1}-\frac{1}{m+1}} )\|u\|_{p+1}^{p+1}-C_{\lambda}  G(0)^{\frac{1}{p+1}-\frac{1}{m+1}+\alpha} G(t)^{-\alpha}  G'(t),
\end{align}
for $t\in [0,T_{max})$. Since $G(t)=-E(t)$, we obtain from (\ref{to-ene}) that
\begin{align}  \label{EI-2}
\int_0^{\infty}\norm{\nabla w(t,s)}_2^2 \mu(s)ds = -2 G(t)- \norm{u_t(t)}_2^2 - \|\nabla u\|_2^2 +\frac{2}{p+1}\|u(t)\|_{p+1}^{p+1}.
\end{align}
By substituting (\ref{EI-2}) into (\ref{doub-1}), one has
\begin{align} \label{doub-2}
&N''(t) \geq  (1+\delta)\|u_t\|_2^2+2\delta G(t) + \left(\delta -\frac{k(0)-1}{4\delta}-1  \right)  \norm{\nabla u}_2^2 \notag\\
&+\left(1-\frac{2\delta}{p+1}-\lambda G(0)^{\frac{1}{p+1}-\frac{1}{m+1}} \right)\|u\|_{p+1}^{p+1} -C_{\lambda}  G(0)^{\frac{1}{p+1}-\frac{1}{m+1}+\alpha} G(t)^{-\alpha}  G'(t),
\end{align}
for $t\in [0,T_{max})$. We intend to select $\delta>0$ such that
\begin{align*}
\delta -\frac{k(0)-1}{4\delta}-1  \geq 0    \text{\;\;\;and\;\;\;}   1-\frac{2\delta}{p+1}>0.
\end{align*}
These two restrictions imply that
\begin{align*}
\frac{\sqrt{k(0)}+1}{2} \leq \delta  < \frac{p+1}{2},
\end{align*}
which is valid since $p>\sqrt{k(0)}$ by the assumption of the theorem. In the following we choose
\begin{align*}
\delta = \frac{\sqrt{k(0)}+1}{2},
\end{align*}
and select $\lambda>0$ such that
\begin{align*}
\lambda G(0)^{\frac{1}{p+1}-\frac{1}{m+1}}=\frac{1}{2}-\frac{\delta}{p+1}
=\frac{p-\sqrt{k(0)}}{2(p+1)},
\end{align*}
then  inequality (\ref{doub-2}) can be reduced to
\begin{align*}
&N''(t) \geq \frac{1}{2}\left(\sqrt{k(0)}+3\right)\|u_t\|_2^2+\left(\sqrt{k(0)}+1\right)G(t)  \notag\\
&+\frac{p-\sqrt{k(0)}}{2(p+1)}\|u\|_{p+1}^{p+1} -C_{\lambda}  G(0)^{\frac{1}{p+1}-\frac{1}{m+1}+\alpha} G(t)^{-\alpha}  G'(t),
\end{align*}
for all $t\in [0,T_{max})$. Now, since $Y'(t)=(1-\alpha)G(t)^{-\alpha}G'(t)+\epsilon N''(t)$, if we select $\epsilon>0$ small enough so that  $$\epsilon C_{\lambda}G(0)^{\frac{1}{p+1}-\frac{1}{m+1}+\alpha} \leq 1-\alpha.$$
Thus, one has
\begin{align}  \label{Y-0}
Y'(t) \geq &  \frac{\epsilon}{2}\left(\sqrt{k(0)}+3\right)\|u_t\|_2^2+\epsilon\left(\sqrt{k(0)}+1\right)G(t)+\frac{\epsilon\left[p-\sqrt{k(0)}\right]}{2(p+1)} \|u\|_{p+1}^{p+1},
\end{align}
for $t\in [0,T_{max})$.

Recall that $G(0)=-E(0)>0$, and since $G(t)$ is nondecreasing by (\ref{Gprime}), it follows that $G(t)>0$ for $t\in [0,T_{max})$.
Thanks to (\ref{Y-0}), we have $Y'(t)>0$, i.e. $Y(t)$ is monotone increasing for $t\in [0,T_{max})$.
Note that $Y(0)=G(0)^{1-\alpha}+\epsilon N'(0)$. If in case $N'(0)<0$, in order to make sure that $Y(0)>0$, we shall impose an extra restriction on $\epsilon$:
\begin{align*}
0<\epsilon \leq -\frac{G(0)^{1-\alpha}}{2N'(0)}.
\end{align*}
As a result,
\begin{align}  \label{YG}
Y(t)\geq Y(0) \geq \frac{1}{2}G(0)^{1-\alpha}>0 \text{\;\;\;for all\;\;\;} t\in [0,T_{max}).
\end{align}

Recall the assumption $p>m\geq 1$   and our choice of $\a$, namely,  $0<\alpha<\frac{1}{m+1}-\frac{1}{p+1}$.  Thus, $\a<\frac{1}{2}$, and in particular, $1<\frac{1}{1-\alpha}<2$. We aim to show that
\begin{align}   \label{Y-1}
Y'(t) \geq \epsilon^{1+\sigma} C(k(0),p)  Y(t)^{\frac{1}{1-\alpha}},  \text{\;\;\;for\;\;} t\in [0,T_{max}),
\end{align}
where $\sigma=1-\frac{2}{(1-2\alpha)(p+1)}$. If (\ref{Y-1}) is valid, then we will have $Y(t)$ blows up in finite time, due to the fact that $Y(0)>0$ and $\frac{1}{1-\alpha}>1$.

Since $Y(t)=G(t)^{1-\alpha}+\epsilon N'(t)$, if we let $\epsilon\leq 1$, it follows that
\begin{align}  \label{Y-2}
Y(t)^{\frac{1}{1-\alpha}} \leq C\left(G(t)+|N'(t)|^{\frac{1}{1-\alpha}}\right), \text{\;\;\;for\;\;} t\in [0,T_{max}).
\end{align}
Since $N'(t)=\int_{\O}uu_t dx$, then by the Cauchy-Schwarz and Young's inequalities, we have
\begin{align}  \label{doub-3}
|N'(t)|^{\frac{1}{1-\alpha}}
\leq \|u_t\|_2^{\frac{1}{1-\alpha}} \|u\|_2^{\frac{1}{1-\alpha}}
\leq C \|u_t\|_2^{\frac{1}{1-\alpha}} \|u\|_{p+1}^{\frac{1}{1-\alpha}}
\leq C\left(\|u_t\|_2^2 + \|u\|_{p+1}^{\frac{2}{1-2\alpha}}\right).
\end{align}
Notice that
\begin{align}   \label{p-1}
\|u\|_{p+1}^{\frac{2}{1-2\alpha}}=\left(\|u\|_{p+1}^{p+1}\right)^{\frac{2}{(1-2\alpha)(p+1)}}
=\left(\|u\|_{p+1}^{p+1}\right)^{\frac{2}{(1-2\alpha)(p+1)}-1} \|u\|_{p+1}^{p+1}.
\end{align}
Now we impose an extra restriction on $\alpha$:
$$0<\alpha<\frac{p-1}{2(p+1)},$$
 then $\sigma=1-\frac{2}{(1-2\alpha)(p+1)}>0$. By virtue of (\ref{GG}) and the fact that $G(t)$ is nondecreasing for $t\in [0,T_{max})$, and letting $0<\epsilon \leq  G(0)$, it follows from (\ref{p-1}) that
\begin{align*}
\|u\|_{p+1}^{\frac{2}{1-2\alpha}}=\left(\|u\|_{p+1}^{p+1}\right)^{-\sigma} \|u\|_{p+1}^{p+1}
\leq C G(t)^{-\sigma} \|u\|_{p+1}^{p+1} \leq C G(0)^{-\sigma} \|u\|_{p+1}^{p+1}
\leq C\epsilon^{-\sigma}  \|u\|_{p+1}^{p+1}.
\end{align*}
By substituting the above inequality into (\ref{doub-3}), one has
\begin{align}   \label{doub-4}
|N'(t)|^{\frac{1}{1-\alpha}}
\leq  C\left(\|u_t\|_2^2 + \epsilon^{-\sigma}  \|u\|_{p+1}^{p+1} \right), \text{\;\;\;for\;\;} t\in [0,T_{max}).
\end{align}
Since $\epsilon\leq 1$ and $\sigma>0$, then
\begin{align*}
|N'(t)|^{\frac{1}{1-\alpha}}
\leq C \epsilon^{-\sigma}  \left(\|u_t\|_2^2 + \|u\|_{p+1}^{p+1} \right),    \text{\;\;\;for\;\;} t\in [0,T_{max}),
\end{align*}
and along with (\ref{Y-2}), it follows that
\begin{align} \label{Y-3}
Y(t)^{\frac{1}{1-\alpha}}  \leq C \epsilon^{-\sigma}\left(G(t)+\|u_t\|_2^2 + \|u\|_{p+1}^{p+1}\right),  \text{\;\;\;for\;\;} t\in [0,T_{max}).
\end{align}
By virtue of (\ref{Y-0}) and (\ref{Y-3}), we obtain the desired inequality (\ref{Y-1}), which implies that $T_{max}$ is necessarily finite, i.e. the system (\ref{1.1}) blows up in finite time. In particular,
\begin{align*}
T_{max}<   \frac{1-\alpha}{\alpha}\epsilon^{-(1+\sigma)} C(k(0),p)Y(0)^{-\frac{\alpha}{1-\alpha}}
\leq  \frac{1-\alpha}{\alpha}\epsilon^{-(1+\sigma)} C(k(0),p) G(0)^{-\alpha},
\end{align*}
where the last inequality is due to (\ref{YG}).

Since $T_{max}$ is the maximum life span of the solution in the finite energy space $H^1(\O)\times L^2(\O)$ and we have shown that $T_{max}<\infty$, then it must be the case that
\begin{align}  \label{Y-12}
\limsup_{t \rightarrow T_{max}^-} \mathscr E(t) = \infty.
\end{align}
To see this, assume to the contrary that there exists $C_0>0$ such that $\mathscr E(t)\leq C_0$ for all $t\in [0,T_{max})$. Then by Theorem \ref{thm-exist} and Definition \ref{def-weak}, there exists a unique weak solution $u(t)$ on $(-\infty,T_0]$ with the regularity that $u\in C([0,T_0];H^1_0(\O))$ and $u_t\in C([0,T_0];L^2(\O))$ where $T_0>0$ depending on $C_0$ such that $T_{max}$ is not an integer multiple of $T_0$. Thus, there exists a natural number $n_0$ such that $n_0 T_0< T_{max}< (n_0+1)T_0$, and by iterating the conclusion of Theorem \ref{thm-exist} for $n_0+1$ times, the system (\ref{1.1}) admits a unique weak solution $u(t)$ on $(-\infty,(n_0+1)T_0]$, which contradicts the fact that $T_{max}$ is the maximum lifespan of the weak solution for (\ref{1.1}).

By using (\ref{to-ene}) and (\ref{EI-1}), we obtain
\begin{align*}
\frac{1}{p+1}\|u(t)\|_{p+1}^{p+1}=\mathscr E(t)
-E(t) \geq \mathscr E(t)-E(0),
\end{align*}
and along with (\ref{Y-12}), we obtain that
\begin{align}  \label{Y-13}
\limsup_{t \rightarrow T_{max}^-} \norm{u(t)}_{p+1}=\infty.
\end{align}
Finally, thanks to the Sobolev inequality $\|u(t)\|_{p+1} \leq \gamma \|\nabla u(t)\|_2$\,\,  ($p<5$ from Remark \ref{rem}), we conclude from (\ref{Y-13}) that
$$\limsup_{t\rightarrow T_{max}^-} \|\nabla u(t)\|_2 = \infty,$$
completing the proof.
\end{proof}

\bigskip

\section{Proof of Theorem \ref{thm-blow2}}  \label{S3}
This section is devoted to proving Theorem \ref{thm-blow2}, which is a finite-time blow-up result for (\ref{1.1}) under the scenario that the initial total energy $E(0)$ is nonnegative. In particular, it states that if the initial total energy $0\leq E(0)<M$ where $M>0$ is defined in (\ref{def-M}), and the initial quadratic energy $\mathscr E(0)>y_0$ where $y_0$ is defined in (\ref{con-1}), then the weak solution of (\ref{1.1}) blows up in finite time, provided the source dominates dissipation in the sense that $p> \max \{m,\sqrt{k(0)}\}$.

In order to have a better understanding of the assumptions of Theorem \ref{thm-blow2}, we shall provide the following discussions before proving the theorem. Recall that, for given $p\in (1,5]$, we set $\gamma>0$ to be the best constant for the Sobolev inequality $\|u\|_{p+1} \leq \gamma \|\nabla u\|_2$ for all $u\in H^1_0(\O)$, i.e, $\gamma^{-1}=\inf \left\{\norm{\nabla u}_2: u\in H^1_0(\O),  \|u\|_{p+1}=1 \right\}.$
Let us define the function $F: \mathbb R^+ \rightarrow \mathbb R$ by
\begin{align}     \label{Fy}
F(y)=y-\frac{1}{p+1} (2\gamma^2 y)^{\frac{p+1}{2}}.
\end{align}
We remark that the expression of $F$ originates from the right-hand side of the inequality (\ref{ene-1}) below. Since $\frac{p+1}{2}>1$, it follows that the function $F(y)$ obtains its maximum in $[0, \infty)$ at $y=y_0$, where
\begin{align}   \label{con-1}
y_0:=\frac{1}{2} \gamma^{-\frac{2(p+1)}{p-1}},
\end{align}
and the maximum value $d$ of $F(y)$ is
\begin{align}  \label{con}
d:=F(y_0)=\left(\frac{1}{2}-\frac{1}{p+1}\right) \gamma^{-\frac{2(p+1)}{p-1}}.
\end{align}

\begin{remark}   \label{rem1}
The constant $d$ defined in (\ref{con}) coincides with the mountain pass level (also the depth of the potential well \cite{Payne}), i.e., we claim
\begin{align}  \label{mount}
d=\inf_{u\in H^1_0(\O)\backslash \{0\}} \sup_{ \lambda \geq 0} J(\lambda u),
\end{align}
where we define $J(u)=\frac{1}{2}\norm{\nabla u}_2^2 - \frac{1}{p+1}\norm{u}_{p+1}^{p+1}$.
In order to verify  (\ref{mount}), we calculate
\begin{align*}
\partial_{\lambda} J(\lambda u)= \lambda \norm{\nabla u}_2^2 - \lambda^p \norm{u}_{p+1}^{p+1}, \;\;\;\;p>1.
\end{align*}
It follows that the maximum value of $J(\lambda u)$ for $\lambda\geq 0$ occurs at $\lambda_0>0$ such that $\norm{\nabla u}_2^2=\lambda_0^{p-1}\norm{u}_{p+1}^{p+1}$, for $u\not=0$. As a result,
\begin{align*}
\inf_{u\in H^1_0(\O)\backslash \{0\}} \sup_{ \lambda \geq 0} J(\lambda u)
&=\inf_{u\in H^1_0(\O)\backslash \{0\}} J(\lambda_0 u) \notag\\
&=\inf_{u\in H^1_0(\O)\backslash \{0\}}\left\{\frac{1}{2}\lambda_0^2\norm{\nabla u}_2^2
- \frac{1}{p+1}  \lambda_0^{p+1} \norm{u}_{p+1}^{p+1}\right\} \notag\\
&= \left(\frac{1}{2}-\frac{1}{p+1}\right)\inf_{u\in H^1_0(\O)\backslash \{0\}} \left(\frac{\norm{\nabla u}_2}{\|u\|_{p+1}}\right)^{\frac{2(p+1)}{p-1}} \notag\\
&= \left(\frac{1}{2}-\frac{1}{p+1}\right) \gamma^{-\frac{2(p+1)}{p-1}}=d
\end{align*}
where we have used (\ref{em}) and (\ref{con}).
\end{remark}

\smallskip

Next, we put
\begin{align} \label{ystar}
y^*:=(\sqrt{k(0)}+1)^{\frac{2}{p-1}}  (2\gamma^2)^{-\frac{p+1}{p-1}}.
\end{align}
By the assumption $k(\infty)=1$ and $k'(s)<0$ for all $s>0$, we know that $k(0)>1$, and thus, due to (\ref{ystar}) and (\ref{con-1}), one has
\begin{align}   \label{ystar2}
y^*>\frac{1}{2}\gamma^{-\frac{2(p+1)}{p-1}}=y_0.
\end{align}
Also, we define the constant $M$ by
\begin{align}   \label{defM}
M:=& \,F(y^*) \notag\\
=& \,y^*\left(\frac{p-\sqrt{k(0)}}{p+1}\right)=(\sqrt{k(0)}+1)^{\frac{2}{p-1}}  (2\gamma^2)^{-\frac{p+1}{p-1}}\left(\frac{p-\sqrt{k(0)}}{p+1}\right)>0,
\end{align}
provided $p>\sqrt{k(0)}$.
Recall that in Theorem \ref{thm-blow2} we assume that the initial total energy $E(0)<M$.

We have mentioned that the function $F(y)$ reaches its maximum at $y=y_0$, and monotone decreasing when $y>y_0$, therefore, we see that
\begin{align}    \label{defM2}
0<M=F(y^*)<F(y_0)=d,
\end{align}
due to (\ref{ystar2}) and (\ref{con}), i.e., $M$ is less than the depth of the potential well. Clearly, $M\rightarrow d^-$ as $k(0)\rightarrow 1^+$ by (\ref{defM}) and (\ref{con}), which can be interpreted as that, if the linear memory term is formally diminished in (\ref{1.1}),
then $M$ (which is the upper bound of initial energy) gets close to the mountain pass level $d$.

Now we are ready to prove Theorem \ref{thm-blow2}.

\begin{proof}
The proof draws from some ideas in \cite{G-T,Messaoudi2,Viti2}. Let us define the life span $T_{max}$ of the solution $u$ to be the supremum of all $T>0$ such that $u$ is a solution of (\ref{1.1}) on $(-\infty,T]$. We aim to show that $T_{max}$ is necessarily finite, that is, $u$ blows up in finite time.

By (\ref{to-ene}) and (\ref{em}) we have
\begin{align} \label{ene}
E(t) = \mathscr E(t) - \frac{1}{p+1} \|u(t)\|_{p+1}^{p+1} \geq \mathscr E(t) - \frac{1}{p+1} \gamma^{p+1} \|\nabla u(t)\|_2^{p+1},
\end{align}
for $t\in [0,T_{max})$. Since $\mathscr E(t)=\frac{1}{2}\left(\norm{u_t(t)}_2^2+\norm{\nabla u(t)}_2^2+\int_0^{\infty}\norm{\nabla w(t,s)}_2^2 \mu(s)ds \right)$, one has
\begin{align*}
\|\nabla u(t)\|_2 \leq \left(2\mathscr E(t)\right)^{\frac{1}{2}},      \;\;\;\;\;\text{for}\;\;t\in [0,T_{max}),
\end{align*}
and thus the inequality (\ref{ene}) implies
\begin{align}  \label{ene-1}
E(t) \geq \mathscr E(t) - \frac{1}{p+1} [2\gamma^2\mathscr E(t)]^{\frac{p+1}{2}},   \;\;\;\;\;  \text{for}\;\;     t\in [0,T_{max}).
\end{align}
Notice that, by using the function $F(y)$ defined in (\ref{Fy}), then inequality (\ref{ene-1}) takes the concise form
\begin{align}   \label{ene-2}
E(t)\geq F(\mathscr E(t)),        \;\;\;\;\;   \text{for}\;\;     t\in [0,T_{max}).
\end{align}
Recall that the continuous function $F(y)$ attains its maximum value at $y=y_0$, so it is monotone decreasing when $y>y_0$. Since we assume the initial energy $0\leq E(0)<M=F(y^*)$, there exists a unique number $y_1$ such that
\begin{align}   \label{ystar3'}
F(y_1)=E(0), \text{\;\;\;with\;\;\;} y_1>y^*>y_0>0.
\end{align}
Therefore, by using (\ref{EI-1}) and (\ref{ene-2}), we have
\begin{align}  \label{ystar3}
M>F(y_1)=E(0) \geq E(t) \geq F(\mathscr E(t)),    \;\;\;\;\;   \text{for}\;\;     t\in [0,T_{max}).
\end{align}
Since $F(y)$ is continuous and decreasing when $y>y_0$ and $\mathscr E(t)$ is also continuous, then by using the assumption that $\mathscr E(0) > y_0$, it follows from (\ref{ystar3}) that
\begin{align}    \label{ene-3}
\mathscr E(t) \geq y_1,     \;\;\;\;\;   \text{for}\;\;     t\in [0,T_{max}).
\end{align}
Consequently, by (\ref{ystar3}), (\ref{ene-3}), (\ref{defM}) and (\ref{Fy}), one has
\begin{align*}
\frac{1}{p+1} \|u(t)\|_{p+1}^{p+1} =\mathscr E(t) - E(t) \geq  y_1-F(y_1) = \frac{1}{p+1}  (2\gamma^2y_1)^{\frac{p+1}{2}},
\end{align*}
which can be reduced to
\begin{align}   \label{upp}
\|u(t)\|_{p+1}^{p+1} \geq  (2\gamma^2 y_1)^{\frac{p+1}{2}},  \text{\;\;\;\;\;for\;\;}   t\in [0,T_{max}).
\end{align}

Now we set $\mathcal G(t) = M - E(t)>0$ and $N(t)=\frac{1}{2}\|u(t)\|_2^2$ for $t\in [0,T_{max})$. We aim to show that
\begin{align} \label{def-Y'}
\mathcal Y(t)=\mathcal G(t)^{1-\alpha} + \epsilon N'(t)
\end{align}
blows up in finite time, for some $\alpha\in (0,1)$ and $\epsilon>0$, which will be selected later. By differentiating both sides of (\ref{def-Y'}) and using (\ref{doub}), one has
\begin{align}    \label{tilY}
\mathcal Y'(t) = (1-\alpha) \mathcal G(t)^{-\alpha} \mathcal G'(t) + \epsilon \Big(&\|u_t\|_2^2  - \|\nabla u\|_2^2 + \int_0^{\infty} k'(s) \int_{\O} \nabla u(t) \cdot \nabla w(t,s) dx ds   \notag\\
&- \int_{\O}  |u_t|^{m-1} u_t u dx + \|u\|_{p+1}^{p+1}\Big).
\end{align}
By using (\ref{ene-3}) and (\ref{defM}) we obtain
\begin{align*}
\mathcal G(t) = M - E(t) &= M - \mathscr E(t) + \frac{1}{p+1} \|u(t)\|_{p+1}^{p+1}  \notag\\
&\leq y^*\left(\frac{p-\sqrt{k(0)}}{p+1}\right) - y_1 + \frac{1}{p+1} \|u(t)\|_{p+1}^{p+1}  \notag\\
&=(y^*-y_1)-y^*\left(\frac{\sqrt{k(0)}+1}{p+1}\right) + \frac{1}{p+1} \|u(t)\|_{p+1}^{p+1} \notag\\
&<-y^*\left(\frac{\sqrt{k(0)}+1}{p+1}\right) + \frac{1}{p+1} \|u(t)\|_{p+1}^{p+1},
\end{align*}
since $y_1>y^*$. The last inequality can be expressed as
\begin{align}    \label{upp-1}
\|u(t)\|_{p+1}^{p+1}>(p+1)\mathcal G(t) + y^*(\sqrt{k(0)}+1).
\end{align}
Also, since $\mathcal G(t)=M-E(t)$ then by the energy identity (\ref{EI-1}), we have
\begin{align}  \label{tilG}
\mathcal G'(t) = - E'(t)= \|u_t\|_{m+1}^{m+1}-\frac{1}{2}\int_0^{\infty} \|\nabla w(t,s)\|_2^2  \mu'(s)  ds  \geq \|u_t\|_{m+1}^{m+1},
\end{align}
where we have used the assumption $\mu'(s)\leq 0$. Note that (\ref{tilG}) shows that $\mathcal G(t)$ is nondecreasing for $t\in [0,T_{max})$.

By employing (\ref{upp-1}) as well as (\ref{tilG}), we can carry out the same estimate used in (\ref{ut}), (\ref{ut-0})-(\ref{ut-1}) to obtain
\begin{align}  \label{ut-2}
\left|\int_{\O} u |u_t|^{m-1} u_t dx \right| \leq \lambda \mathcal G(0)^{\frac{1}{p+1}-\frac{1}{m+1}} \|u\|_{p+1}^{p+1} +  C_{\lambda}  \mathcal G(0)^{\frac{1}{p+1}-\frac{1}{m+1}+\alpha} \mathcal G(t)^{-\alpha}  \mathcal G'(t),
\end{align}
where as before, we choose $0<\alpha<\frac{1}{m+1}-\frac{1}{p+1}$, and $\lambda$ is a positive constant which will be selected later.

By applying the estimates (\ref{ut-2}) and (\ref{ks}) to the identity (\ref{tilY}), we have
\begin{align}  \label{Y-4}
\mathcal Y'(t) \geq &  \left[1-\alpha- \epsilon  C_{\lambda}  \mathcal G(0)^{\frac{1}{p+1}-\frac{1}{m+1}+\alpha}\right] \mathcal G(t)^{-\alpha} \mathcal G'(t) \notag\\
&+ \epsilon\Big[ \|u_t\|_2^2 - \left(\frac{k(0)-1}{4\delta}+1\right) \|\nabla u\|_2^2 - \delta \int_0^{\infty}   \|\nabla w(t,s)\|^2_2   \mu(s) ds \notag\\
&\;\;\;\;\;\;\;\;\;+(1-\lambda \mathcal G(0)^{\frac{1}{p+1}-\frac{1}{m+1}} )\|u\|_{p+1}^{p+1}\Big],  \text{\;\;\;for\;\;} t\in [0,T_{max}).
\end{align}
By (\ref{to-ene}) we see that
\begin{align*}
\int_0^{\infty} \norm{\nabla w(t,s)}_2^2 \mu(s) ds  =  2E(t) - \|u_t\|_2^2 - \|\nabla u\|_2^2  + \frac{2}{p+1}\|u\|_{p+1}^{p+1},
\end{align*}
which can be substituted into (\ref{Y-4}) to obtain,
\begin{align}   \label{Y-5}
\mathcal Y'(t) \geq &\left[1-\alpha- \epsilon  C_{\lambda}  \mathcal G(0)^{\frac{1}{p+1}-\frac{1}{m+1}+\alpha}\right] \mathcal G(t)^{-\alpha} \mathcal G'(t) + \epsilon\Bigg[ \Big(\delta-\frac{k(0)-1}{4\delta}-1\Big) \|\nabla u\|_2^2    \notag\\
& + (1+\delta)\|u_t\|_2^2  +\Big(1-\frac{2\delta}{p+1}-\lambda \mathcal G(0)^{\frac{1}{p+1}-\frac{1}{m+1}} \Big)\|u\|_{p+1}^{p+1}  - 2\delta E(t) \Bigg],
\end{align}
for $t\in [0,T_{max})$. Now we choose
\begin{align*}
\delta=\frac{\sqrt{k(0)}+1}{2},
\end{align*}
so that $\delta-\frac{k(0)-1}{4\delta}-1=0$, and thus the inequality (\ref{Y-5}) takes the form
\begin{align}  \label{Y-6}
\mathcal Y'(t) \geq &\left[1-\alpha- \epsilon  C_{\lambda}  \mathcal G(0)^{\frac{1}{p+1}-\frac{1}{m+1}+\alpha}\right] \mathcal G(t)^{-\alpha} \mathcal G'(t) + \epsilon\Bigg[  \frac{\sqrt{k(0)}+3}{2}\|u_t\|_2^2 \notag\\
&   +\left(\frac{p-\sqrt{k(0)}}{p+1}-\lambda \mathcal G(0)^{\frac{1}{p+1}-\frac{1}{m+1}} \right)\|u\|_{p+1}^{p+1}  - \left(\sqrt{k(0)}+1\right) E(t) \Bigg],
\end{align}
for $t\in [0,T_{max})$, where we require $p>\sqrt{k(0)}$.

Next, we aim to show that
$$\frac{p-\sqrt{k(0)}}{p+1}  \|u\|_{p+1}^{p+1}  - \left(\sqrt{k(0)}+1\right) E(t) > c\|u\|_{p+1}^{p+1}, \text{\;\;\;for all\;\;}t\in [0,T_{max}),$$
for some $c>0$.
For the sake of convenience, we put
\begin{align}  \label{C0}
C_0=(2\gamma^2 y_1)^{\frac{p+1}{2}}.
\end{align}
Then,  it follows from(\ref{EI-1}), (\ref{ystar3'}) and (\ref{Fy}) that,
\begin{align}   \label{Fy-2}
E(t)\leq E(0)=F(y_1)=y_1-\frac{1}{p+1} (2\gamma^2 y_1)^{\frac{p+1}{2}}=y_1-\frac{1}{p+1}C_0,
\end{align}
for $t\in [0,T_{max})$.

Now we split the term $\frac{p-\sqrt{k(0)}}{p+1}\|u\|_{p+1}^{p+1}$ into two positive parts:
\begin{align}  \label{split}
\frac{p-\sqrt{k(0)}}{p+1}\|u\|_{p+1}^{p+1}    =&\left(\frac{p-\sqrt{k(0)}}{p+1}-\frac{C_0-(\sqrt{k(0)}+1)y_1}{2C_0}\right)\|u\|_{p+1}^{p+1}  \notag\\
&+\frac{C_0-(\sqrt{k(0)}+1)y_1}{2C_0} \|u\|_{p+1}^{p+1}.
\end{align}
The fact that the two terms on the right-hand side of (\ref{split}) are both positive comes from the following straightforward calculations. Indeed,
by (\ref{C0}) and the fact that $y_1>y_0=\frac{1}{2} \gamma^{-\frac{2(p+1)}{p-1}}$ as well as the assumption $p>\sqrt{k(0)}>1$, we compute
\begin{align} \label{posi}
&\frac{p-\sqrt{k(0)}}{p+1}-\frac{C_0-(\sqrt{k(0)}+1)y_1}{2C_0} \notag\\
&=\frac{C_0(p-2\sqrt{k(0)}-1)+(\sqrt{k(0)}+1)y_1(p+1)}{2C_0(p+1)} \notag\\
&>\frac{y_1\left[(2\gamma^2)^{\frac{p+1}{2}}y_0^{\frac{p-1}{2}}(p-2\sqrt{k(0)}-1)+(\sqrt{k(0)}+1)(p+1)\right]}{2C_0(p+1)}  \notag\\
&=\frac{y_1\left[3(p-\sqrt{k(0)})+p\sqrt{k(0)}-1\right]}{2C_0(p+1)}>0.
\end{align}
Also, thanks  to (\ref{C0}) and the fact that $y_1>y^*=(\sqrt{k(0)}+1)^{\frac{2}{p-1}}  (2\gamma^2)^{-\frac{p+1}{p-1}}$, we see that
\begin{align}  \label{bdefc}
C_0-(\sqrt{k(0)}+1)y_1>\left[ (2\gamma^2)^{\frac{p+1}{2}} (y^*)^{\frac{p-1}{2}}- (\sqrt{k(0)}+1) \right]y_1=0.
\end{align}
Thus, we can define the positive constant $c$ as
\begin{align}  \label{def-c}
c:=\frac{C_0-(\sqrt{k(0)}+1)y_1}{2C_0}>0.
\end{align}
Applying (\ref{posi}) and (\ref{def-c}) along with the fact that $\|u(t)\|_{p+1}^{p+1} \geq  C_0$ for $t\in [0,T_{max})$ from (\ref{upp}), we obtain from (\ref{split}) that
\begin{align}   \label{psk}
\frac{p-\sqrt{k(0)}}{p+1}\|u\|_{p+1}^{p+1}\geq\left(\frac{p-\sqrt{k(0)}}{p+1}-\frac{C_0-(\sqrt{k(0)}+1)y_1}{2C_0}\right)C_0+c \|u\|_{p+1}^{p+1}.
\end{align}
By using (\ref{Fy-2}) and (\ref{psk}), we calculate
\begin{align}   \label{pk1}
&\frac{p-\sqrt{k(0)}}{p+1}\|u\|_{p+1}^{p+1}  - (\sqrt{k(0)}+1) E(t) \notag\\
&\geq  \left(\frac{p-\sqrt{k(0)}}{p+1}-\frac{C_0-(\sqrt{k(0)}+1)y_1}{2C_0}\right)C_0 +c\|u\|_{p+1}^{p+1} \notag\\
& \hspace{1 in}- (\sqrt{k(0)}+1) \left(y_1 - \frac{1}{p+1} C_0\right)  \notag\\
&=\frac{C_0-(\sqrt{k(0)}+1)y_1}{2}+ c\|u\|_{p+1}^{p+1}  \notag\\
&> c\|u\|_{p+1}^{p+1},
\end{align}
where the last inequality follows from (\ref{bdefc}).

Applying (\ref{pk1}) to (\ref{Y-6}) yields
\begin{align}  \label{Y-7}
\mathcal Y'(t) > &\left[1-\alpha- \epsilon  C_{\lambda}  \mathcal G(0)^{\frac{1}{p+1}-\frac{1}{m+1}+\alpha}\right] \mathcal G(t)^{-\alpha} \mathcal G'(t)\notag\\
 &+\epsilon\left[  \frac{\sqrt{k(0)}+3}{2}\|u_t\|_2^2 +\left(c-\lambda \mathcal G(0)^{\frac{1}{p+1}-\frac{1}{m+1}} \right)\|u\|_{p+1}^{p+1}  \right].
\end{align}
Now, we choose $\lambda>0$ such that $\lambda \mathcal G(0)^{\frac{1}{p+1}-\frac{1}{m+1}}=\frac{c}{2}$ and select $\epsilon>0$ sufficiently small so that $\epsilon  C_{\lambda}  \mathcal G(0)^{\frac{1}{p+1}-\frac{1}{m+1}+\alpha}\leq 1-\alpha$, we obtain from (\ref{Y-7}) that
\begin{align}  \label{Y-8}
\mathcal Y'(t) >  \frac{\epsilon}{2}\left[  \left(\sqrt{k(0)}+3\right)\|u_t(t)\|_2^2 + c \|u(t)\|_{p+1}^{p+1}  \right], \text{\;\;\;\;\;for\;\;} t\in [0,T_{max}).
\end{align}

Combining the estimates (\ref{Y-8}) and (\ref{upp-1}) yields that
\begin{align}   \label{Y-9}
\mathcal Y'(t) >  \frac{\epsilon}{2}\left[  \left(\sqrt{k(0)}+3\right)\|u_t(t)\|_2^2 + \frac{c}{2} \|u(t)\|_{p+1}^{p+1} + \frac{c}{2}(p+1) \mathcal G(t)  \right]>0,
\end{align}
for $t\in [0,T_{max})$, where the last inequality is due to the fact $\mathcal G(t)=M-E(t)>0$.

Notice that $\mathcal Y(0)=\mathcal G(0)^{1-\alpha}+\epsilon N'(0)$, and if $N'(0)<0$, then we shall further impose the restriction  $0<\epsilon\leq -\frac{\mathcal G(0)^{1-\alpha}}{2N'(0)}$ so that $\mathcal Y(0)\geq \frac{1}{2} \mathcal G(0)^{1-\alpha}$. Since $\mathcal Y(t)$ is increasing on $[0,T_{max})$ by virtue of (\ref{Y-9}), it follows that
\begin{align}    \label{Y-11}
\mathcal Y(t)\geq \mathcal Y(0)\geq \frac{1}{2} \mathcal G(0)^{1-\alpha}>0, \text{\;\;\;\;\;for\;\;} t\in [0,T_{max}).
\end{align}

Also, by following the estimates (\ref{Y-2})-(\ref{Y-3}) in the proof of Theorem \ref{thm-blow},
and by imposing the additional restrictions on $\alpha$ and $\epsilon$, namely, $0<\alpha<\frac{p-1}{2(p+1)}$
and $0<\epsilon \leq \min\{\mathcal G(0),1\}$,  we obtain
\begin{align}   \label{Y-10}
\mathcal Y(t)^{\frac{1}{1-\alpha}} \leq C \epsilon^{-\sigma} \left(\mathcal G(t)  + \norm{u_t(t)}_2^2 +\norm{u(t)}_{p+1}^{p+1} \right),
\text{\;\;\;\;\;for\;\;} t\in [0,T_{max}),
\end{align}
where $\sigma=1-\frac{2}{(1-2\alpha)(p+1)}>0$.

By taking account of inequalities (\ref{Y-9}) and (\ref{Y-10}), we see that
\begin{align*}
\mathcal Y'(t) \geq \epsilon^{1+\sigma} C(k(0),p,E(0)) \mathcal Y(t)^{\frac{1}{1-\alpha}},    \text{\;\;\;\;for\;\;} t\in [0,T_{max}),
\end{align*}
and since $\frac{1}{1-\alpha}>1$, we conclude that $T_{max}$ is necessarily finite. More precisely,
\begin{align*}
T_{max}< \frac{1-\alpha}{\alpha}\epsilon^{-(1+\sigma)}C(k(0),p, E(0)) \mathcal Y(0)^{-\frac{\alpha}{1-\alpha}}
\leq \frac{1-\alpha}{\alpha}\epsilon^{-(1+\sigma)}C(k(0),p, E(0)) \mathcal G(0)^{-\alpha},
\end{align*}
where the last inequality comes from (\ref{Y-11}). Finally, by adopting the same argument as in the proof of Theorem \ref{thm-blow}, we conclude that $\limsup_{t\rightarrow T_{max}^-}\norm{\nabla u(t)}_2=\infty$ and $\limsup_{t\rightarrow T_{max}^-}\norm{u(t)}_{p+1}=\infty$.
\end{proof}

\vspace{0.05 in}

We remark that, if the condition $\mathscr E(0) > y_0$ in Theorem \ref{thm-blow2} is replaced with a different assumption $\norm{u_0(0)}_{p+1}^{p+1} > \norm{\nabla u_0(0)}_2^2$, then the solution still blows up in finite time. Specifically, we have the following corollary of Theorem \ref{thm-blow2}.
Please refer to \cite{ACCRT,BRT,GR2} for comparable results concerning blow-up of wave equations with nonlinear sources and damping (but without memory) by using a different approach which involves a contradiction argument.
\begin{corollary}  \label{cor1}
In addition to the validity of the Assumption \ref{ass}, we assume that $p>\max\{m,\sqrt{k(0)}\}$. Also, we suppose that $\norm{u_0(0)}_{p+1}^{p+1} > \norm{\nabla u_0(0)}_2^2$, and
\begin{align*}
0\leq E(0)<M:=(\sqrt{k(0)}+1)^{\frac{2}{p-1}}  (2\gamma^2)^{-\frac{p+1}{p-1}}\left(\frac{p-\sqrt{k(0)}}{p+1}\right).
\end{align*}
Then the weak solution $u$ of the system (\ref{1.1}) blows up in finite time. More precisely, $\limsup_{t\rightarrow T_{\max}^-} \|\nabla u(t)\|_2=\infty$, for some $T_{\max}\in (0,\infty)$.
\end{corollary}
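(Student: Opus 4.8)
The plan is to reduce Corollary~\ref{cor1} to Theorem~\ref{thm-blow2}. Since the assumption $0\le E(0)<M$ appears identically in both statements, the only thing to check is that the alternative hypothesis $\|u_0(0)\|_{p+1}^{p+1}>\|\nabla u_0(0)\|_2^2$ forces $\mathscr E(0)>y_0$; once this is established, Theorem~\ref{thm-blow2} applies without change and yields $T_{max}<\infty$ together with $\limsup_{t\to T_{max}^-}\|\nabla u(t)\|_2=\infty$.

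First I would observe that $u_0(0)\ne 0$ in $H^1_0(\O)$: otherwise $\|\nabla u_0(0)\|_2=0=\|u_0(0)\|_{p+1}$, which contradicts the strict inequality in the hypothesis. Set $a:=\|\nabla u_0(0)\|_2>0$. Combining the Sobolev inequality $\|u_0(0)\|_{p+1}\le\gamma a$ (valid since $u_0(0)\in H^1_0(\O)$ by Assumption~\ref{ass}) with the hypothesis $a^2<\|u_0(0)\|_{p+1}^{p+1}$ gives
\begin{align*}
a^2<\|u_0(0)\|_{p+1}^{p+1}\le\gamma^{p+1}a^{p+1}.
\end{align*}
Dividing by $a^2$ and using $p>1$ (which holds because $p>\sqrt{k(0)}>1$), this yields $a^{p-1}>\gamma^{-(p+1)}$, hence
\begin{align*}
\|\nabla u_0(0)\|_2^2=a^2>\gamma^{-\frac{2(p+1)}{p-1}}=2y_0
\end{align*}
by the definition (\ref{con-1}) of $y_0$.

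Next I would use that $\mathscr E(t)$ in (\ref{1.6}) is a sum of three nonnegative terms (the memory term is nonnegative because $\mu(s)=-k'(s)>0$). Evaluating at $t=0$ and recalling $u(0)=u_0(0)$, so $\|\nabla u(0)\|_2=\|\nabla u_0(0)\|_2$, we obtain $\mathscr E(0)\ge\tfrac12\|\nabla u_0(0)\|_2^2>y_0$. Thus both hypotheses $\mathscr E(0)>y_0$ and $0\le E(0)<M$ of Theorem~\ref{thm-blow2} are satisfied, and the corollary follows.

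I do not expect a real obstacle here: the entire content is the elementary Sobolev estimate above, which says precisely that the position-only unstable-set condition $\|u_0(0)\|_{p+1}^{p+1}>\|\nabla u_0(0)\|_2^2$ already places the initial quadratic energy strictly above the potential-well level $y_0$. The only point requiring a line of care is keeping the inequality strict, which is why one first rules out $u_0(0)=0$. As an alternative one could instead re-run the invariance argument (\ref{Y-4})--(\ref{Y-10}) of Theorem~\ref{thm-blow2} starting directly from the unstable-set condition, but the reduction above is shorter and avoids repeating those estimates.
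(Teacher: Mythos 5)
Your reduction is correct, and it follows the same overall strategy as the paper: both proofs dispose of the corollary by showing that the position condition $\norm{u_0(0)}_{p+1}^{p+1}>\norm{\nabla u_0(0)}_2^2$ forces $\mathscr E(0)>y_0$, after which Theorem \ref{thm-blow2} applies verbatim. The only difference is in how that implication is verified. The paper routes it through the mountain-pass characterization of Remark \ref{rem1}: it computes the maximizer $\lambda_0$ of $\lambda\mapsto J(\lambda u_0(0))$, notes the hypothesis gives $\lambda_0<1$, and deduces $d\le J(\lambda_0 u_0(0))<\left(\tfrac12-\tfrac1{p+1}\right)\norm{\nabla u_0(0)}_2^2$, hence $\norm{\nabla u_0(0)}_2^2>\tfrac{2(p+1)}{p-1}d=2y_0$. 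You instead unwind the definition of the best Sobolev constant directly: from $a^2<\norm{u_0(0)}_{p+1}^{p+1}\le\gamma^{p+1}a^{p+1}$ with $a=\norm{\nabla u_0(0)}_2>0$ you get $a^{p-1}>\gamma^{-(p+1)}$, i.e.\ $a^2>\gamma^{-\frac{2(p+1)}{p-1}}=2y_0$, and then $\mathscr E(0)\ge\tfrac12 a^2>y_0$. Since $d$ is itself expressed through $\gamma$ in (\ref{con}), the two computations are equivalent; yours is the more elementary unwinding and avoids invoking Remark \ref{rem1}, while the paper's version makes the potential-well interpretation (the datum lies above the mountain-pass level) explicit. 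A small point in your favor: you explicitly rule out $u_0(0)=0$ before dividing by $a^2$, which the paper leaves implicit (its $\lambda_0$ is only defined for $u_0(0)\neq 0$). No gaps.
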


\begin{proof}
It is sufficient to show that the condition $\norm{u_0(0)}_{p+1}^{p+1} > \norm{\nabla u_0(0)}_2^2$ implies that $\mathscr E(0) >  y_0$. To this end, let us recall that $J(u)=\frac{1}{2} \norm{\nabla u}_2^2 - \frac{1}{p+1} \norm{u}_{p+1}^{p+1}$, then the maximum value of $J(\lambda u_0(0))$ for $\lambda \geq 0$ occurs at $\lambda_0$ such that $\norm{\nabla u_0(0)}_2^2 =  \lambda_0^{p-1} \norm{u_0(0)}_{p+1}^{p+1}$.
Also, since $\norm{u_0(0)}_{p+1}^{p+1} > \norm{\nabla u_0(0)}_2^2$, it follows that $\lambda_0<1$.
Consequently, by (\ref{mount}), we obtain
\begin{align*}
d\leq \sup_{\lambda\geq 0} J(\lambda u_0(0))= J(\lambda_0 u_0(0))
&=\frac{1}{2} \lambda_0^2 \norm{\nabla u_0(0)}_2^2  -  \frac{1}{p+1} \lambda_0^{p+1}  \norm{u_0(0)}_{p+1}^{p+1}   \notag\\
&=\lambda_0^2  \left(\frac{1}{2}-\frac{1}{p+1}\right) \norm{\nabla u_0(0)}_2^2  \notag\\
&< \left(\frac{1}{2}-\frac{1}{p+1}\right) \norm{\nabla u_0(0)}_2^2.
\end{align*}
This shows that
\begin{align*}
\norm{\nabla u_0(0)}_2^2 > \frac{2(p+1)}{p-1}d=\gamma^{-\frac{2(p+1)}{p-1}}=2y_0,
\end{align*}
where we have used (\ref{con-1}) and (\ref{con}). Thus, $\mathscr E(0)>y_0$.

\end{proof}

\begin{remark}
Since it has been shown that the condition $\norm{u_0(0)}_{p+1}^{p+1} > \norm{\nabla u_0(0)}_2^2$ implies that $\mathscr E(0) >  y_0$, one realizes that the assumptions of Theorem \ref{thm-blow2} are weaker than the assumptions of Corollary \ref{cor1}, which often appear in the literature (see for instance \cite{ACCRT,BRT,GR1}).
Also, the assumption that $\mathscr E(0) >  y_0$ contains all of the past history from $-\infty$ to $0$, which is a more appropriate assumption for a system with delay, compared to the condition that $\norm{u_0(0)}_{p+1}^{p+1} > \norm{\nabla u_0(0)}_2^2$ which involves partial information of the initial datum only at $t=0$. On the other hand, it is worth mentioning that, if the initial datum satisfies that $0\leq E(0)<M$ and $\norm{u_0(0)}_{p+1}^{p+1} > \norm{\nabla u_0(0)}_2^2$, then $\norm{u(t)}_{p+1}^{p+1} > \norm{\nabla u(t)}_2^2$ for all time $t$ before the formation of singularity. Indeed, if there exists $t_1>0$ such that $\norm{u(t_1)}_{p+1}^{p+1} = \norm{\nabla u(t_1)}_2^2$, then
\begin{align*}
J(u(t_1))=\frac{1}{2} \norm{\nabla u(t_1)}_2^2 - \frac{1}{p+1} \norm{u(t_1)}_{p+1}^{p+1}=\left(\frac{1}{2}-\frac{1}{p+1}\right)\norm{\nabla u(t_1)}_2^2.
\end{align*}
It follows that
\begin{align*}
\norm{\nabla u(t_1)}_2^2 \leq \frac{2(p+1)}{p-1}J(u(t_1))\leq \frac{2(p+1)}{p-1}E(t_1)\leq \frac{2(p+1)}{p-1}E(0).
\end{align*}
Hence, by Sobolev inequality $\norm{u}_{p+1} \leq \gamma \norm{\nabla u}_2$, we obtain
\begin{align}   \label{contr}
\norm{u(t_1)}_{p+1}^{p+1} &\leq \gamma^{p+1} \left(\norm{\nabla u(t_1)}_2^2\right)^{\frac{p-1}{2}}  \norm{\nabla u(t_1)}_2^2 \notag\\
&\leq \gamma^{p+1} \left(\frac{2(p+1)}{p-1}E(0)\right)^{\frac{p-1}{2}} \norm{\nabla u(t_1)}_2^2 \notag\\
&<\norm{\nabla u(t_1)}_2^2
\end{align}
where the last inequality follows from $E(0)<M<d=\left(\frac{1}{2}-\frac{1}{p+1}\right)\gamma^{-\frac{2(p+1)}{p-1}}$. However,  (\ref{contr}) contradicts the assumption that $\norm{u(t_1)}_{p+1}^{p+1} = \norm{\nabla u(t_1)}_2^2$, and so, it must be the case that
$\norm{u(t)}_{p+1}^{p+1} > \norm{\nabla u(t)}_2^2$, for all $t\in [0,T_{max}) $.

\end{remark}

\bibliographystyle{amsplain}

\end{document}